\newtheorem{theorem}{Theorem}[section]
\newtheorem{lemma}[theorem]{Lemma}
\newtheorem{proposition}[theorem]{Proposition}
\newtheorem{corollary}[theorem]{Corollary}
\theoremstyle{definition}
\newtheorem{definition}[theorem]{Definition}
\newtheorem{example}[theorem]{Example}
\newtheorem{question}[theorem]{Question}
\theoremstyle{remark}
\newtheorem{remark}[theorem]{Remark}
\numberwithin{equation}{section}
\title[Spectral decomposition of Normal $\mathcal{AM}$-operators]{Spectral decomposition of Normal Absolutely minimum attaining operators}
\author{Neeru Bala}
\address{Department of Mathematics, Indian Institute of Technology - Hyderabad, Kandi, Sangareddy, Telangana, India 502 285.}
\email{ma16resch11001@iith.ac.in}
\author{G. Ramesh}
\address{Department of Mathematics, Indian Institute of Technology - Hyderabad, Kandi, Sangareddy, Telangana, India 502 285.}
\email{rameshg@iith.ac.in}
	\subjclass[2010]{47A10, 47A15;  47B07, 47B20, 47B40.}
	\keywords{Minimum modulus, absolutely minimum attaining operator, essential spectrum, generalized inverse, paranormal operator.}
\date{\currenttime ;  \today}
\begin{document}

	\begin{abstract}

Let $T:H_1\rightarrow H_2$ be a bounded linear operator defined between complex Hilbert spaces $H_1$ and $H_2$. We say  $T$ to be \textit{minimum attaining} if there exists a unit vector $x\in H_1$ such that $\|Tx\|=m(T)$, where $m(T):=\inf{\{\|Tx\|:x\in H_1,\; \|x\|=1}\}$ is the \textit{minimum modulus} of $T$.
We say $T$ to be \textit{absolutely minimum attaining} ($\mathcal{AM}$-operators in short), if for any closed subspace $M$ of $H_1$ the restriction operator $T|_M:M\rightarrow H_2$ is minimum attaining.
		In this paper, we give a new characterization of positive absolutely minimum attaining operators ($\mathcal{AM}$-operators, in short), in terms of its essential spectrum. Using this we obtain a sufficient condition under which the adjoint of an $\mathcal{AM}$-operator is $\mathcal{AM}$. 
		We show that a paranormal absolutely minimum attaining operator is  hyponormal. Finally, we establish a spectral decomposition of normal absolutely minimum attaining operators. In proving all these results we prove several spectral results for paranormal operators. We illustrate our main result with an example.
	\end{abstract}

	\maketitle
	\section{Introduction}

 The class of minimum attaining operators on Hilbert spaces is first introduced by Carvajal and Neves \cite{CAR1}. An important property of this class of operators is that it is dense in  $\mathcal B(H_1,H_2)$, the space of all bounded linear operators between $H_1$ and $H_2$ with respect to the operator norm \cite{SHKGR}. For densely defined closed operators in Hilbert spaces, this class of operators is studied in \cite{SCH}.

An important subclass of the minimum attaining operators is the class of absolutely minimum attaining operators, which was introduced by Carvajal and Neves in \cite{CAR1}. We say  $T$ is an absolutely minimum attaining operator or $\mathcal{AM}$-operator,  if for every closed subspace  $M$ of $H$ the restriction $T\big|_M:M\rightarrow H_2$ is minimum attaining.

The minimum attaining operators and the absolutely minimum attaining operators are defined analogous to those of norm attaining and the absolutely norm attaining operators, respectively. Recall that $T\in \mathcal B(H_1,H_2)$ is \textit{norm attaining operator} if there exists a $x\in H_1$ with $\|x\|=1$ such that $\|Tx\|=\|T\|$ and \textit{absolutely norm attaining} or $\mathcal{AN}$-operator, if for every closed subspace $N$ of $H_1$ the operator $T\big|_{N}:N\rightarrow H_2$ is norm attaining.

There is a close connection between the minimum attaining operators and norm attaining operators. Similarly, absolutely minimum attaining operators and the absolutely norm attaining operators are related with each other.
	
A partial characterization of positive, absolutely norm attaining operators was given in \cite{CAR1}, which was further improved in \cite{PAL,RAM}. Another characterization of positive, absolutely norm attaining operators is discussed in \cite{RAM1}.

In the present article we give a characterization for positive absolutely minimum attaining operators. Similar to absolutely norm attaining operators, adjoint of an absolutely minimum attaining operator need not be absolutely minimum attaining. Here we will give a condition similar to the one given in \cite{RAM1}, for the adjoint of an absolutely minimum attaining operator to be absolutely minimum attaining.
	
	Next we will study the class of paranormal, $\mathcal{AM}$-operators. In particular, we will show that the class $\{T\in\mathcal{AM}(H): T\text{ is paranormal with }N(T)=N(T^*)\}$ is same as the class of hyponormal $\mathcal{AM}$-operators. We prove a spectral decomposition theorem for normal $\mathcal{AM}$-operators. Our results can be compared to those of paranormal $\mathcal{AN}$-operators given in \cite{RAM1}. Specifically, we will show that if $T$ is normal $\mathcal{AM}$-operator then there exists pairs $(H_{\beta}, U_{\beta})$, where $H_\beta$ is a reducing subspace for $T$, $U_\beta\in \mathcal{B}(H_\beta)$ is a unitary such that,
	\begin{enumerate}
		\item $H=\underset{\beta\in\sigma(|T|)}{\oplus}H_{\beta}$,
		\item $T=\underset{\beta\in\sigma(|T|)}{\oplus}\beta U_{\beta}$.
	\end{enumerate}
	
	There are four sections in this article. In the second section we will provide the basic definitions and results which we will be using through out the article.
	
	In the third section we will give a characterization for positive $\mathcal{AM}$-operators and a sufficient condition for the adjoint of an $\mathcal{AM}$-operator to be $\mathcal{AM}$. Fourth section consists of spectral decomposition of normal $\mathcal{AM}$-operators.
	\section{Preliminaries}
	Throughout this article we consider complex Hilbert spaces which are denoted by $H,H_1,H_2$ etc. Mostly we assume that these spaces are infinite dimensional.  We denote the space of all bounded linear operators from $H_1$ to $H_2$ by $\mathcal{B}(H_1,H_2)$ and  in case if $H_1=H_2=H$, we denote this by $\mathcal{B}(H)$. By an operator we mean a  linear operator all  the time.  If $T\in \mathcal{B}(H_1,H_2)$, the adjoint of $T$ is denoted by $T^*\in \mathcal{B}(H_2,H_1)$. An operator $T\in \mathcal{B}(H)$ is called \textit{normal} if $TT^*=T^*T$, \textit{unitary} if $TT^*=I=T^*T$, \textit{self-adjoint} if $T=T^*$ and \textit{positive} if $T$ is self-adjoint and $\langle Tx,x\rangle\geq 0$ for all $x\in H$. If $S$ and $T$ are two self adjoint operators in $\mathcal{B}(H)$ then $S\leq T$ if and only if $\langle Sx,x\rangle\leq\langle Tx,x\rangle$ for all $x\in H$. Let $\mathcal A\subseteq \mathcal B(H)$, then we denote the set of all self-adjoint elements (operators) of $\mathcal A$ by $\mathcal A_{sa}$ and the set of all positive elements (operators) of $\mathcal A$ by $\mathcal A_{+}$. The set of all positive bounded linear operators on $H$ is denoted by $\mathcal{B}(H)_+$.

	Let $H_1,H_2$ be two Hilbert spaces. Then $H_1\oplus H_2=\{(h_1,h_2):h_1\in H_1,h_2\in H_2\}$ is a Hilbert space with the inner product $\langle.,.\rangle$, given by
	\begin{equation*}
	\langle (h_1,h_2),(k_1,k_2)\rangle=\langle h_1,k_1\rangle_{H_1}+\langle h_2,k_2\rangle_{H_2}
	\end{equation*}
	for $h_1,k_1\in H_1$ and $h_2,k_2\in H_2$.

Let $T_i\in \mathcal B(H_i),\; i=1,2$. Define $T_1\oplus T_2:H_1\oplus H_2\rightarrow H_1\oplus H_2$ by
\begin{equation*}
(T_1\oplus T_2)(x_1,x_2)=(T_1x_1,T_2x_2),\; \text{for all}\; x_i\in H_i,\; i=1,2.
\end{equation*}
This can be represented by a $2\times 2$ operator matrix by $T_1\oplus T_2=\left(
                                                                    \begin{array}{cc}
                                                                      T_1 & 0 \\
                                                                      0 & T_2 \\
                                                                    \end{array}
                                                                  \right).$
                                                                  	
	For $T\in \mathcal{B}(H_1,H_2)$, $N(T)$ and $R(T)$ denote the null space and range space of $T$, respectively. If $M$ is a closed subspace of a Hilbert space $H$, then $M^{\perp}$ is the orthogonal complement of $M$, $T\big|_{M}$ denote the restriction of the operator $T$ to $M$ and the orthogonal projection onto $M$ in $H$, is denoted by $P_M$. The unit sphere of $M$ is $S_M=\{x\in M:\|x\|=1\}$.
	
	An operator $T\in \mathcal{B}(H_1,H_2)$ is said to be  \textit{finite-rank}, if $R(T)$ is finite dimensional. The space of all finite-rank operators in $\mathcal{B}(H_1,H_2)$ is denoted by $\mathcal{F}(H_1,H_2)$ and $\mathcal{F}(H,H)=\mathcal{F}(H)$. If $T\in \mathcal{B}(H_1,H_2)$, then $T$ is said to be compact, if for every bounded set $A\subseteq H_1$, the set $T(A)\subseteq H_2$ is pre-compact. The space of all compact operators in $\mathcal{B}(H_1,H_2)$, is denoted by $\mathcal{K}(H_1,H_2)$.

If $V\in \mathcal B(H)$, then $V$ is called an \textit{isometry} if $\|Vx\|=\|x\|$ for all $x\in H$. We say $V$ to be a \textit{co-isometry} if $V^*$ is an isometry. Equivalently, $V$ is an isometry if and only if $V^*V=I$ and co-isometry if and only if $VV^*=I$.
	
	For $T\in \mathcal{B}(H)$, the set $\sigma(T)=\{\lambda\in\mathbb{C}:T-\lambda I\,\text{ is not invertible in }\mathcal{B}(H)\}$ is called the \textit{spectrum} of $T$. Note that $\sigma(T)$ is a non-empty compact subset of $\mathbb{C}$. The spectrum of $T\in \mathcal B(H)$ decomposes as the disjoint union of the \textit{point spectrum}, $\sigma_p(T)$, the \textit{continuous spectrum}, $\sigma_c(T)$ and the \textit{residual spectrum} $\sigma_r(T)$, where
	\begin{align*}
	\sigma_p(T)=&\{\lambda\in\mathbb{C}:T-\lambda I \text{ is not injective}\},\\
	\sigma_r(T)=&\{\lambda\in\mathbb{C}:T-\lambda I\text{ is injective but }R(T-\lambda I)\text{ is not dense in }H\},\\
	\sigma_c(T)=&\sigma(T)\setminus\left(\sigma_p(T)\cup\sigma_r(T)\right).
	\end{align*}
	If $T$ is normal, then $\sigma_r(T)$ is empty. For a self-adjoint operator $T\in \mathcal{B}(H)$, the spectrum can be divided into disjoint union of the discrete spectrum and the essential spectrum. We summarize these details here.
	\begin{theorem}\cite[Theorem 7.10,7.11, Page 236]{REED}\label{thm8}
	 Let $T=T^*\in \mathcal{B}(H)$, the spectrum $\sigma(T)$ of $T$ decomposes as the disjoint union of the essential spectrum, $\sigma_{ess}(T)$ and the discrete spectrum of $T$, $\sigma_{d}(T)$, where $\sigma_{d}(T)$, is the set of isolated finite multiplicity eigenvalues of $T$. Furthermore, $\lambda\in\sigma_{ess}(T)$ if and only if one of the following holds:
	 \begin{enumerate}
	 \item $\lambda$ is an eigenvalue of infinite multiplicity.
	 \item $\lambda$ is a limit point of $\sigma_p(T)$.
	 \item $\lambda\in\sigma_c(T)$.
	 \end{enumerate}
 \end{theorem}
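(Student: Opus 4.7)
The plan is to derive everything from the spectral theorem for bounded self-adjoint operators. Let $E$ denote the projection-valued spectral measure of $T$, so that $T = \int_{\sigma(T)} t\, dE(t)$. I would first record the standard translations: $\lambda \in \sigma(T)$ iff $E(U) \neq 0$ for every open neighbourhood $U$ of $\lambda$; $\lambda$ is an eigenvalue with geometric multiplicity $\dim R(E(\{\lambda\}))$; and $\lambda$ is isolated in $\sigma(T)$ iff $E((\lambda-\epsilon,\lambda+\epsilon)) = E(\{\lambda\})$ for some $\epsilon > 0$. The decomposition $\sigma(T) = \sigma_d(T) \sqcup \sigma_{ess}(T)$ is then tautological, since $\sigma_{ess}(T)$ is the complement $\sigma(T) \setminus \sigma_d(T)$; the real content is the three-fold characterisation of $\sigma_{ess}(T)$.

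The key intermediate lemma I would establish is the spectral-projection characterisation:
\[
\lambda \in \sigma_{ess}(T) \iff \dim R(E((\lambda-\epsilon,\lambda+\epsilon))) = \infty \text{ for every } \epsilon > 0.
\]
The reverse implication is immediate: if $\lambda$ were isolated in $\sigma(T)$ and of finite multiplicity, then for small $\epsilon$ the neighbourhood projection would collapse to the finite-rank atom $E(\{\lambda\})$. For the forward direction: if $\lambda \notin \sigma_d(T)$ then either $E(\{\lambda\})$ already has infinite rank, or $\lambda$ is not isolated in $\sigma(T)$; in the latter case one picks distinct $\mu_n \to \lambda$ in $\sigma(T)$, chooses disjoint open intervals $I_n \ni \mu_n$ inside $(\lambda-\epsilon,\lambda+\epsilon)$, and notes that the projections $E(I_n)$ are pairwise orthogonal and nonzero, so their orthogonal sum inside $E((\lambda-\epsilon,\lambda+\epsilon))$ is infinite dimensional.

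Granting this characterisation, each of (1)--(3) is quickly seen to force $\lambda \in \sigma_{ess}(T)$: case (1) gives infinite rank inside $E(\{\lambda\})$ itself; case (2) supplies distinct eigenvalues $\lambda_n \to \lambda$ whose pairwise orthogonal atoms $E(\{\lambda_n\})$ sit inside any given shrinking neighbourhood; and case (3) uses that $\sigma_r(T)=\emptyset$ for self-adjoint $T$ together with the fact that isolated spectral points of a self-adjoint operator are always eigenvalues, so a point of $\sigma_c(T)$ is automatically non-isolated and the previous mechanism applies. For the converse I would split on the atom: if $\dim R(E(\{\lambda\})) = \infty$ we are in (1); otherwise $\lambda$ is a non-isolated point of $\sigma(T) = \sigma_p(T)\cup\sigma_c(T)$, and the cases $\lambda \in \sigma_c(T)$ and $\lambda \in \sigma_p(T)$ correspond to (3) and (2) respectively.

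The step I expect to be the main obstacle is the last one: when $\lambda$ is a finite-multiplicity eigenvalue sitting as a non-isolated point of $\sigma(T)$, showing that the accumulation at $\lambda$ can be realised by other \emph{eigenvalues} rather than only by points of $\sigma_c(T)$. The only tool available for this is a finer analysis of $E$ on $(\lambda-\epsilon,\lambda+\epsilon) \setminus \{\lambda\}$, exploiting that the corresponding spectral subspace is infinite dimensional while $E(\{\lambda\})$ is finite dimensional; the crude dimension counting used elsewhere is no longer enough, and one must use the structure of the spectral measure more carefully to pull out a sequence in $\sigma_p(T)$.
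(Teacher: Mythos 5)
The paper gives no proof of this statement at all; it is quoted from Reed--Simon, so there is no internal argument to compare against and your proposal has to stand on its own. Your strategy is the standard one, and most of it is sound: the reduction to the spectral-projection criterion ($\lambda\in\sigma_{ess}(T)$ iff $\dim R\bigl(E((\lambda-\epsilon,\lambda+\epsilon))\bigr)=\infty$ for every $\epsilon>0$), the disjoint-interval argument for non-isolated spectral points, and the three forward implications $(1),(2),(3)\Rightarrow\lambda\in\sigma_{ess}(T)$ are all correct and are essentially how the cited source argues.

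The gap is exactly the step you flagged as the ``main obstacle,'' and it is worse than an obstacle: with the definitions used in this paper the step is false, so no finer analysis of the spectral measure can close it. Here $\sigma_c(T)$ is defined as $\sigma(T)\setminus(\sigma_p(T)\cup\sigma_r(T))$, hence is disjoint from $\sigma_p(T)$; a finite-multiplicity eigenvalue that is a non-isolated point of $\sigma(T)$ therefore need not satisfy any of (1)--(3). Concretely, let $M_x$ be multiplication by the independent variable on $L_2[0,1]$ and take $T=M_x\oplus\tfrac12 I_{\mathbb{C}}$ on $L_2[0,1]\oplus\mathbb{C}$. Then $\sigma(T)=[0,1]$ and $\sigma_p(T)=\{\tfrac12\}$ with multiplicity one, so $\tfrac12\in\sigma_{ess}(T)$ because it is not isolated in $\sigma(T)$; yet it is not an eigenvalue of infinite multiplicity, it is not a limit point of the one-point set $\sigma_p(T)$, and by the paper's convention it does not lie in $\sigma_c(T)$. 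The accumulation at $\lambda$ can live entirely in the continuous part, so you cannot ``pull out a sequence in $\sigma_p(T)$.'' The statement is rescued only by reading (3) with Reed--Simon's $\sigma_{cont}(T)$, the spectrum of the restriction of $T$ to the orthogonal complement of the closed span of all eigenvectors, which is permitted to contain eigenvalues of $T$. With that reading your final case split does close: if $E(\{\lambda\})$ is finite-rank and $\lambda$ is non-isolated, then the points of $\sigma(T)\setminus\{\lambda\}$ accumulating at $\lambda$ lie in $\overline{\sigma_p(T)}\cup\sigma_{cont}(T)$, and since $\sigma_{cont}(T)$ is closed one gets either a sequence of eigenvalues converging to $\lambda$ (case (2)) or $\lambda\in\sigma_{cont}(T)$ (case (3)). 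You should either adopt that definition for alternative (3) or add the missing fourth alternative explicitly.
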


More details about essential spectrum and discrete spectrum can be found in \cite[Page 235, 236]{REED}.
	Now we will quote two results from \cite{TAL}, namely [Theorem 5.2, Page 288] and [Theore 5.4, Page 289]. These results are proved for unbounded operators in \cite{TAL}, but they are true for bounded operators as well. A bounded version of these results is given below.
	
	\begin{theorem}\label{thm7}
		Let $T_i\in\mathcal{B}(H_i),\,i=1,2$ and $T=T_1\oplus T_2$. Then
		\begin{enumerate}
			\item $N(T)=N(T_1)\oplus N(T_2).$
			\item $R(T)=R(T_1)\oplus R(T_2).$
			\item $T^{-1}$ exists if and only if $T_1^{-1}$ and $T_2^{-1}$ exists.
			\item $R(T)=H_1\oplus H_2$ if and only if $R(T_1)=H_1$ and $R(T_2)=H_2$.
		\end{enumerate}
	\end{theorem}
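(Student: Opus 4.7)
The plan is to verify all four items directly from the componentwise definition $(T_1\oplus T_2)(x_1,x_2)=(T_1x_1,T_2x_2)$, since each statement reduces to a pair of coordinate conditions.

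For (1), I would observe that $(x_1,x_2)\in N(T)$ iff $(T_1x_1,T_2x_2)=(0,0)$, which by the definition of the inner product on $H_1\oplus H_2$ (equivalently, by unpacking zero in a product) holds iff $T_1x_1=0$ and $T_2x_2=0$, i.e.\ $(x_1,x_2)\in N(T_1)\oplus N(T_2)$. For (2), $(y_1,y_2)\in R(T)$ iff there exist $x_i\in H_i$ with $T_ix_i=y_i$ for $i=1,2$, which is exactly $y_1\in R(T_1)$ and $y_2\in R(T_2)$; the ``$\oplus$'' here is understood as the algebraic direct sum of subspaces inside $H_1\oplus H_2$, and no closure is required.

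Item (3) then follows by combining (1) and (2): the operator $T$ is a bijection iff $N(T)=\{0\}$ and $R(T)=H_1\oplus H_2$, which by (1)--(2) is equivalent to $N(T_i)=\{0\}$ and $R(T_i)=H_i$ for $i=1,2$, i.e.\ each $T_i$ is bijective; boundedness of the inverses on both sides is automatic from the bounded inverse theorem, so ``$T^{-1}$ exists in $\mathcal{B}(H_1\oplus H_2)$'' matches ``$T_i^{-1}$ exists in $\mathcal{B}(H_i)$''. Item (4) is the surjectivity half of (3) in isolation, and is immediate from (2): $R(T_1)\oplus R(T_2)=H_1\oplus H_2$ iff $R(T_i)=H_i$ for each $i$.

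There is essentially no obstacle here; the only place where one needs a small interpretive remark is (2), where $R(T_1)$ and $R(T_2)$ may fail to be closed but the conclusion is still an honest set equality in $H_1\oplus H_2$. I would simply state this caveat and move on. Since the verifications are one-line each, I would write the proof as a single short block that treats (1) and (2) by direct computation and deduces (3) and (4) as formal consequences.
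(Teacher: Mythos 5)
Your verification is correct and complete; all four items do reduce to the coordinatewise statements exactly as you argue, and your caveat that the range identity in (2) is a purely algebraic direct sum (no closures taken) is the right thing to flag. Note that the paper itself offers no proof here --- it quotes the result from Taylor and Lay \cite{TAL} --- so there is no authorial argument to diverge from; your direct computation is the standard one. The only interpretive wrinkle is item (3): in Taylor--Lay's convention ``$T^{-1}$ exists'' can mean merely that $T$ is injective, in which case (3) follows from (1) alone, whereas you read it as bounded invertibility and derive it from (1) and (2) together with the bounded inverse theorem; both readings are true and your argument covers the stronger one, so nothing is lost.
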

	\begin{theorem}\label{thm9}
		Let $T$ be as defined in Theorem (\ref{thm7}). Then
		\begin{enumerate}
			\item $\sigma(T)=\sigma(T_1)\cup\sigma(T_2)$.
			\item $\sigma_p(T)=\sigma_p(T_1)\cup\sigma_p(T_2)$.
			If it is further assumed that $\sigma(T_1)$ and $\sigma(T_2)$ have no point in common, it follows that
			\item $\sigma_c(T)=\sigma_c(T_1)\cup\sigma_c(T_2)$.
			\item $\sigma_r(T)=\sigma_r(T_1)\cup\sigma_r(T_2)$.
		\end{enumerate}
	\end{theorem}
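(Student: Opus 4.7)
The plan is to reduce each part to Theorem~\ref{thm7} after observing that $T-\lambda I = (T_1-\lambda I_1)\oplus(T_2-\lambda I_2)$, so Theorem~\ref{thm7} applies with $T_i$ replaced by $T_i-\lambda I_i$. Parts (1) and (2) are immediate corollaries requiring no hypothesis on $\sigma(T_1), \sigma(T_2)$: for (1), part (3) of Theorem~\ref{thm7} tells us $T-\lambda I$ is invertible if and only if both $T_i-\lambda I$ are invertible, hence $\lambda\notin\sigma(T)$ iff $\lambda\notin\sigma(T_1)\cup\sigma(T_2)$; for (2), part (1) of Theorem~\ref{thm7} gives $N(T-\lambda I)=N(T_1-\lambda I)\oplus N(T_2-\lambda I)$, so $T-\lambda I$ fails to be injective precisely when at least one summand does.

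For (4), first I would prove the inclusion $\sigma_r(T_1)\cup\sigma_r(T_2)\subseteq\sigma_r(T)$. Suppose $\lambda\in\sigma_r(T_1)$; by disjointness $\lambda\notin\sigma(T_2)$, so $T_2-\lambda I$ is invertible. Using Theorem~\ref{thm7}(1) we find that $T-\lambda I$ is injective (both summands are), and by (2) we have $R(T-\lambda I)=R(T_1-\lambda I)\oplus H_2$, which fails to be dense in $H_1\oplus H_2$ because $R(T_1-\lambda I)$ is not dense in $H_1$. For the reverse inclusion, assume $\lambda\in\sigma_r(T)$. By (1) already proved, $\lambda\in\sigma(T_1)\cup\sigma(T_2)$, and by the disjointness hypothesis $\lambda$ lies in exactly one, say $\sigma(T_1)$. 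Then $T_2-\lambda I$ is invertible, so $R(T_2-\lambda I)=H_2$; combined with non-density of the full range, $R(T_1-\lambda I)$ must fail to be dense in $H_1$. Injectivity of $T_1-\lambda I$ follows from injectivity of $T-\lambda I$ via Theorem~\ref{thm7}(1), concluding $\lambda\in\sigma_r(T_1)$.

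For (3) I would simply combine the pieces, using the decomposition $\sigma_c(S)=\sigma(S)\setminus(\sigma_p(S)\cup\sigma_r(S))$ valid for any bounded operator. Given disjointness of $\sigma(T_1)$ and $\sigma(T_2)$, the sets $\sigma_p(T_i)$ and $\sigma_r(T_i)$ are pairwise disjoint across $i=1,2$; so from (1), (2), (4) one can take the set-theoretic difference and obtain $\sigma_c(T)=\sigma_c(T_1)\cup\sigma_c(T_2)$.

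The only genuine subtlety is seeing why the disjointness hypothesis is needed for (3) and (4): without it a point $\lambda$ could lie in $\sigma_p(T_1)\cap\sigma_c(T_2)$ (or $\sigma_p(T_1)\cap\sigma_r(T_2)$), in which case it belongs to $\sigma_p(T)$ and hence by disjointness of the tripartition to neither $\sigma_c(T)$ nor $\sigma_r(T)$, breaking both equalities. Thus the main obstacle is not technical difficulty but the careful bookkeeping in the reverse inclusions, where disjointness is what forces the ``bad'' index $i$ with $T_i-\lambda I$ non-invertible to be unique and then permits one to read off which spectral subtype $\lambda$ belongs to on that side.
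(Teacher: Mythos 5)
Your proof is correct. The paper does not actually prove this statement --- it is quoted without proof from Taylor--Lay --- and your argument, reducing everything to Theorem (\ref{thm7}) applied to $T-\lambda I=(T_1-\lambda I)\oplus(T_2-\lambda I)$ and then doing the set-theoretic bookkeeping under the disjointness hypothesis, is exactly the standard textbook proof; the only cosmetic point is that in part (1) you should invoke both items (3) and (4) of Theorem (\ref{thm7}) (injectivity and surjectivity of each summand), bounded invertibility of the resulting bijection being automatic from the open mapping theorem.
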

  \begin{definition}\cite{BER}
 Let $T\in\mathcal{B}(H_1,H_2)$. Then
 \begin{enumerate}
 	\item The \textit{minimum modulus} of $T$ is $m(T):=\inf\{\|Tx\|:x\in S_{H_1}\}$.
 	\item The \textit{essential minimum modulus} of $T$ is $m_e(T):=\inf\{|\lambda|:\lambda\in\sigma_{ess}(T)\}$.
 \end{enumerate}
 \end{definition}

 It is to be noted that $m(T)>0$ if and only if $R(T)$ is closed and $T$ is one-to-one. In particular, if $H_1=H_2=H$ and $T$ is normal, then $m(T)>0$ if and only if $T^{-1}$ exists and $T^{-1}\in\mathcal{B}(H)$.

\begin{definition}\cite[Definition 1.1]{CAR1}
Let $T\in \mathcal{B}(H_1,H_2)$. Then
\begin{enumerate}
	\item  $T$ is norm attaining, if there exist $x\in S_{H_1}$ such that $\|T\|=\|Tx\|$, where $\|T\|=\sup\{\|Tx\|:x\in H_1,\|x\|=1\}$.
	\item  $T$ is absolutely norm attaining operator, if for every closed subspace $M\subseteq H_1$, $T\big|_M:M\rightarrow H_2$ is norm attaining operator.
\end{enumerate}
\end{definition}

 We denote the classes of norm attaining and absolutely norm attaining operators in $\mathcal{B}(H_1,H_2)$ by $\mathcal{N}(H_1,H_2)$ and $\mathcal{AN}(H_1,H_2)$, respectively. In particular, $\mathcal N(H,H)$ and $\mathcal{AN}(H,H)$ are denoted by $\mathcal N(H)$ and $\mathcal{AN}(H)$, respectively.

\begin{definition}\cite{CAR1}
	Let $T\in \mathcal{B}(H_1,H_2)$. Then
	\begin{enumerate}
		\item $T$ is minimum attaining, if there exist $x\in S_{H_1}$ such that $m(T)=\|Tx\|$.
		\item $T$ is absolutely minimum attaining operator, if for every closed subspace $M\subseteq H_1$, $T\big|_M$ is minimum attaining operator.
	\end{enumerate}
\end{definition}

  We denote the classes of minimum attaining and absolutely minimum attaining operators in $\mathcal{B}(H_1,H_2)$ by $\mathcal{M}(H_1,H_2)$ and $\mathcal{AM}(H_1,H_2)$, respectively. In particular, $\mathcal M(H,H)$ and $\mathcal{AM}(H,H)$ are denoted by $\mathcal M(H)$ and $\mathcal{AM}(H)$, respectively.

  \begin{theorem}\cite[Theorem 5.9]{GAN1}\label{thm6}
  	Let $H$ be a complex Hilbert space of arbitrary dimension and let $P$ be a positive operator on $H$. Then $P$ is an $\mathcal{AM}$-operator if and only if $P$ is of the form $P = \beta I -K +F$, where $\beta \geq 0,K\in\mathcal{K}(H)_+$ with $\|K\|\leq\beta$ and $F\in\mathcal{F}(H)_+$, satisfying $KF=FK=0$.
  \end{theorem}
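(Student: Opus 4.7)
The plan is to establish both directions through the Borel functional calculus, with $\sigma_{\mathrm{ess}}(P)$ as the central invariant, producing each half via contradiction and bad-subspace constructions.

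For \textbf{sufficiency} (the form implies $\mathcal{AM}$): assume $P = \beta I - K + F$ with the stated properties. I would first exploit $KF = FK = 0$ together with self-adjointness to conclude $R(K) \perp R(F)$, so $M_F := \overline{R(F)}$ (finite-dimensional) is a reducing subspace on which $P = \beta I + F|_{M_F}$, while on $M_F^\perp$ we have $P = \beta I - K|_{M_F^\perp}$ with $K|_{M_F^\perp}$ compact. Given a closed subspace $W \subseteq H$, I would produce an attaining vector for $m(P|_W)$ by a weak-compactness argument: for a minimizing sequence $\{x_n\} \subseteq S_W$ with weak limit $x_n \rightharpoonup x \in W$, the compactness of both $K$ and $F$ yields $\|Px_n\|^2 \to \|Px\|^2 + \beta^2(1-\|x\|^2)$. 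The case $\|x\| = 1$ is immediate; when $\|x\| < 1$, combining this identity with $\|Px\| \geq m(P|_W)\|x\|$ forces $m(P|_W) \leq \beta$, and an attaining vector is then extracted from the purely discrete spectral subspace of $P$ on $[0,\beta)$ intersected with $W$.

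For \textbf{necessity} (the $\mathcal{AM}$ property implies the form): assume $P \in \mathcal{AM}(H)_+$. I would establish (i) $\sigma_{\mathrm{ess}}(P) = \{\beta\}$ for a single $\beta \geq 0$, and (ii) $\sigma(P) \cap (\beta, \infty)$ is a finite set of eigenvalues of finite total multiplicity. For (i): if $\alpha < \gamma$ both lay in $\sigma_{\mathrm{ess}}(P)$, Theorem \ref{thm8} and Weyl's criterion would provide mutually orthogonal orthonormal sequences $\{x_n\}, \{y_n\}$ with $\|(P - \alpha I)x_n\|$ and $\|(P - \gamma I)y_n\|$ decaying as rapidly as needed. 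The orthonormal sequence $z_n := (y_n - n x_n)/\sqrt{1+n^2}$ would satisfy $\|Pz_n\|^2 \to \alpha^2$, and on $M := \overline{\mathrm{span}}\{z_n\}$ one checks that for every $z = \sum c_n z_n$ the expansion $\|Pz\|^2 \approx \sum_n |c_n|^2 (\gamma^2 + n^2\alpha^2)/(1+n^2)$ strictly exceeds $\alpha^2 \|z\|^2$, so $m(P|_M) = \alpha$ is approached but never attained --- contradicting the $\mathcal{AM}$ hypothesis. For (ii): infinitely many eigenvalues $\lambda_n > \beta$ with $\lambda_n \to \beta$ (the only accumulation permitted by (i)) would, via orthonormal eigenvectors $v_n$, yield a subspace $M$ on which $\|P \sum c_k v_k\|^2 = \sum \lambda_k^2 |c_k|^2 > \beta^2 \sum |c_k|^2$, so $m(P|_M) = \beta$ is unattained.

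With (i) and (ii) in hand, I would define $K := (\beta I - P)_+$ and $F := (P - \beta I)_+$ via the Borel functional calculus. Positivity and self-adjointness are immediate; $\|K\| \leq \beta$ and compactness of $K$ follow from $\sigma(K)$ being the null sequence $\{0\} \cup \{\beta - \mu : \mu \in \sigma(P) \cap [0, \beta)\}$; $F$ is finite-rank because its nonzero spectrum is a finite set of finite-multiplicity eigenvalues; $KF = FK = 0$ follows from disjointness of their spectral supports; and $P = \beta I - K + F$ is tautological from $P - \beta I = (P - \beta I)_+ - (P - \beta I)_-$. The main obstacle I anticipate is the first bad-subspace construction in (i): controlling the Weyl-sequence errors uniformly across arbitrary linear combinations $\sum c_n z_n \in M$ --- rather than only along the sequence $\{z_n\}$ --- so that the strict lower bound $\|Pz\|^2 > \alpha^2 \|z\|^2$ survives, which in turn forces a careful choice of decay rate for the approximate eigenvector sequences.
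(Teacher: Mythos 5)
This theorem is imported from \cite{GAN1}; the paper under review contains no proof of it, so I can only assess your argument on its own terms. Your necessity direction is essentially the standard (and correct) route: the two bad-subspace constructions --- two distinct points of $\sigma_{ess}(P)$, or infinitely many eigenvalues above $\beta$ accumulating at $\beta$ --- are exactly the obstructions, and the functional-calculus definitions $K=(\beta I-P)_+$, $F=(P-\beta I)_+$ then deliver the stated form; the Weyl-sequence error control you flag at the end is routine to arrange. (Note that this half in effect proves the paper's Theorem \ref{thm3} directly, whereas the paper deduces Theorem \ref{thm3} \emph{from} Theorem \ref{thm6} via Moore--Penrose inverses and the $\mathcal{AN}$ theory.)

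The genuine gap is in the sufficiency half, at the final attainment step. Two problems. First, a sign error: from $m^2=\|Px\|^2+\beta^2(1-\|x\|^2)$ and $\|Px\|\ge m\|x\|$ one gets $m^2(1-\|x\|^2)\ge\beta^2(1-\|x\|^2)$, i.e.\ $m\ge\beta$, not $m\le\beta$; the inequality $m\le\beta$ does hold, but it comes from normalizing the differences $x_n-x\rightharpoonup 0$, whose images under the compact operators $K$ and $F$ tend to $0$. Second, and more seriously, once $m(P|_W)=\beta$ your recipe of extracting an attaining vector from the spectral subspace of $P$ for $[0,\beta)$ intersected with $W$ cannot work: if $0\neq w\in W$ lies in $\mathrm{Ran}\,E([0,\beta))$ then $\|Pw\|^2=\int_{[0,\beta)}\lambda^2\,d\langle E_\lambda w,w\rangle<\beta^2\|w\|^2$, contradicting $m(P|_W)=\beta$; so that intersection is $\{0\}$ and there is nothing to extract. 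A working replacement: since $KF=FK=0$, one has $P^2=\beta^2I-K'+F'$ with $K'=2\beta K-K^2\in\mathcal{K}(H)_+$ and $F'=2\beta F+F^2\in\mathcal{F}(H)_+$, so $\|Pw\|^2-\beta^2\|w\|^2=\langle (F'-K')w,w\rangle$. The identity $m(P|_W)=\beta$ says $P_WK'P_W\le P_WF'P_W$ on $W$, which forces $P_WK'P_W|_W$ to be finite rank (a positive operator dominated by a finite-rank positive operator is finite rank, since $\ker$ of the dominating operator is contained in $\ker$ of the dominated one); hence $P_W(F'-K')P_W|_W$ is a finite-rank positive operator on $W$ and, when $\dim W=\infty$, has a unit vector in its kernel --- that vector attains $\beta$. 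Without some such step, or a reduction to the known case $\beta I-K$ followed by a finite-rank perturbation argument in the spirit of Proposition \ref{REM2}, the sufficiency direction is incomplete.
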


  Let $T\in \mathcal B(H_1,H_2)$. For $y\in H_2$, we say that $u\in H_1$ is a least square solution of the equation $Tx=y$, if $\|Tu-y\|\leq\|Tx-y\|$ for all $x\in H_1$. Moreover, if $u_0$ is a least square solution of $Tx=y$ and $\|u_0\|\leq \|u\|$ for any least square solution $u$ of $Tx=y$, then $u_0$ is called \textit{the least square solution of minimal norm}. Such a solution is unique.
  \begin{definition}\cite[ Page 223]{GRO1}
 Suppose $T\in\mathcal{B}(H_1,H_2)$ and let $D(T^{\dagger})=R(T)\oplus R(T)^{\perp}$, where $D(T^{\dagger})$ is domain of $T^{\dagger}$. The \textit{generalized inverse} or the \textit{Moore-Penrose inverse} of $T$ is the operator $T^{\dagger}:D(T^{\dagger})\to H_1$ which assigns to each $b\in D(T^{\dagger})$ the unique least square solution of minimal norm of the equation $Tx=b$.
\end{definition}
  If $T$ has closed range, then $T^{\dagger}$ is the unique operator in $\mathcal{B}(H_2,H_1)$ satisfying $TT^{\dagger}=P_{R(T)}$ and $T^{\dagger}T=P_{R(T^{\dagger})}.$

Here we list out some of the properties of the Moore-Penrose inverse, which we need to prove our results. The following theorem is true for densely defined closed operators in a Hilbert space, which is also true for bounded operators. We state it for bounded operators.
\begin{theorem}\cite[Theorem 2, Page 341]{BEN}\label{thmben}
	Let $T \in\mathcal{B}(H_1,H_2)$. Then
	\begin{enumerate}
		\item $ R(T^{\dagger}) = N(T)^{\perp}$.
		\item $ N(T^{\dagger}) = R(T)^{\perp} = N(T^*)$
		\item  $T^\dagger$ is continuous if and only if $R(T)$ is closed.
		\item $(T^{\dagger})^{\dagger} = T$.
		\item $(T^*)^{\dagger}=(T^{\dagger})^*$.
		\item $N((T^*)^{\dagger})=N(T)$.
		\item $T^{\dagger}(T^*)^{\dagger}$ is positive and $(T^*T)^{\dagger}=T^{\dagger}(T^*)^{\dagger}$.
		\item $(T^*)^{\dagger}T^{\dagger}$ is positive and $(TT^*)^{\dagger}=(T^*)^{\dagger}T^{\dagger}$.
	\end{enumerate}
\end{theorem}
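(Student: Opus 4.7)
The approach is to work directly from the defining property: for $b\in D(T^{\dagger})=R(T)\oplus R(T)^{\perp}$, $T^{\dagger}b$ is the unique element of $N(T)^{\perp}$ whose image under $T$ equals $P_{\overline{R(T)}}b=:b_1$. Equivalently, $S=T^{\dagger}$ is characterized by the four Moore--Penrose relations $TST=T$, $STS=S$, $(TS)^{*}=TS$, $(ST)^{*}=ST$ on the appropriate domains, together with $R(S)\subseteq N(T)^{\perp}$. Essentially every item of the theorem is an immediate consequence once these characterizations are in hand; I would organize the proof by first establishing (1)--(3), then using the symmetry of the Moore--Penrose axioms to derive (4)--(6), and finally checking the product identities (7)--(8).

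\textbf{Steps (1)--(3) and (4)--(6).} For (1), $R(T^{\dagger})\subseteq N(T)^{\perp}$ is automatic from minimality of norm; for the reverse inclusion I would take $x\in N(T)^{\perp}$, set $b=Tx$, and note that $x$ is itself the minimum-norm least-squares solution of $Ty=b$, whence $T^{\dagger}b=x$. For (2), $T^{\dagger}b=0$ exactly when $0$ is the minimum-norm least-squares solution, i.e.\ $P_{\overline{R(T)}}b=0$, i.e.\ $b\in R(T)^{\perp}=N(T^{*})$. For (3), when $R(T)$ is closed, $T$ restricts to a bounded bijection $N(T)^{\perp}\to R(T)$, whose inverse is bounded by the open mapping theorem; composing with $P_{R(T)}$ produces $T^{\dagger}$ and shows it is bounded. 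Conversely, if $T^{\dagger}$ is bounded then $TT^{\dagger}$ extends to a bounded projection onto $R(T)$, forcing $R(T)$ closed. For (4) and (5), the Moore--Penrose relations are symmetric in $T$ and $T^{\dagger}$, which gives $(T^{\dagger})^{\dagger}=T$ immediately, and taking adjoints in those four relations shows that $(T^{\dagger})^{*}$ satisfies the Moore--Penrose relations for $T^{*}$, giving $(T^{*})^{\dagger}=(T^{\dagger})^{*}$. Then (6) follows instantly from (5) and (1):
\[
N((T^{*})^{\dagger})=N((T^{\dagger})^{*})=R(T^{\dagger})^{\perp}=(N(T)^{\perp})^{\perp}=N(T).
\]

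\textbf{Steps (7)--(8) and main obstacle.} Using (5) rewrite $T^{\dagger}(T^{*})^{\dagger}=T^{\dagger}(T^{\dagger})^{*}$, which is of the form $AA^{*}$ and hence positive; the same argument gives positivity in (8). The nontrivial content is the product identity $(T^{*}T)^{\dagger}=T^{\dagger}(T^{*})^{\dagger}$. I would verify it by checking that $S:=T^{\dagger}(T^{*})^{\dagger}$ satisfies the four Moore--Penrose relations with respect to $A:=T^{*}T$, namely $ASA=A$, $SAS=S$, and symmetry of $AS$ and $SA$; uniqueness of the Moore--Penrose inverse then forces $S=A^{\dagger}$. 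This is the main obstacle, because the reverse-order law $(AB)^{\dagger}=B^{\dagger}A^{\dagger}$ is \emph{false} for generic factorizations; here it holds only because the ranges line up: $R(T^{*})=N(T)^{\perp}\supseteq R((T^{*})^{\dagger})$ and $R(T)=N(T^{*})^{\perp}\supseteq R(T^{\dagger})$ (with closures where $R(T)$ is not closed), which is exactly what makes the four relations collapse to identities. Subtlety with non-closed range is the one point where I would need to be careful, restricting each identity to its proper domain and using that $T^{\dagger}$ is defined only on $R(T)\oplus R(T)^{\perp}$. Item (8) follows by the same computation with $T$ replaced by $T^{*}$ and invoking (4).
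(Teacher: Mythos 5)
The paper offers no proof of this statement at all: it is quoted verbatim from Ben--Israel and Greville \cite{BEN} as background, so there is nothing to compare your argument against line by line. On its own terms your sketch is essentially the standard textbook proof and is correct in outline: (1)--(3) from the least-squares/minimal-norm definition, (4)--(6) from the symmetry and adjoint-invariance of the Penrose equations (your derivation of (6) via $N((T^{\dagger})^{*})=R(T^{\dagger})^{\perp}=N(T)$ is fine, and one can get it even faster from (2) applied to $T^{*}$, since $R(T^{*})^{\perp}=N(T)$), and (7)--(8) by verifying the four Penrose equations for $S=T^{\dagger}(T^{*})^{\dagger}$ against $A=T^{*}T$, which does go through because $AS$ and $SA$ both collapse to $P_{N(T)^{\perp}}$. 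One genuine slip worth fixing: the parenthetical range containments you give to justify the reverse-order law, namely $R(T^{*})=N(T)^{\perp}\supseteq R((T^{*})^{\dagger})$ and $R(T)=N(T^{*})^{\perp}\supseteq R(T^{\dagger})$, do not even typecheck when $H_{1}\neq H_{2}$ (for instance $R((T^{*})^{\dagger})=N(T^{*})^{\perp}=\overline{R(T)}\subseteq H_{2}$, while $N(T)^{\perp}\subseteq H_{1}$, and $R(T^{\dagger})=N(T)^{\perp}\subseteq H_{1}$, not a subset of $R(T)$). The identities you actually need are $R(T^{\dagger})=N(T)^{\perp}=\overline{R(T^{*})}$ and $R((T^{*})^{\dagger})=\overline{R(T)}$, which make $TT^{\dagger}$, $T^{\dagger}T$, $(T^{*})^{\dagger}T^{*}$ the appropriate orthogonal projections; with that correction the verification of the four equations is exactly as you describe. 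You are also right that the only delicate point is the non-closed-range case, where $T^{\dagger}$ is unbounded and the Penrose-equation characterization must be handled on $D(T^{\dagger})$; the paper sidesteps all of this by citation.
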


	\section{$\mathcal{AM}$-operators }
In this section we give a new characterization of positive $\mathcal{AM}$-operators in terms of the essential spectrum. Using this we give a sufficient condition under which the adjoint of an $\mathcal{AM}$-operator is again an $\mathcal{AM}$-operator.

	First, we have the following observation.
	\begin{remark}\label{REM1}
		Let $T\in \mathcal{AM}(H)_+$. By Theorem (\ref{thm6}), $T=\beta I-K+F$, where $\beta\geq0$, $K\in \mathcal{K}(H)_+$, $F\in \mathcal{F}(H)_+$, $KF=0=FK$ and $\|K\|\leq \beta$. Then we have the following:
		\begin{enumerate}
			\item $\beta=0$ if and only if $T$ is finite-rank.
			\item $\sigma_{ess}(T)=\{\beta\}$.
			\item $\beta=0$ if and only if $N(T)$ is infinite dimensional.
					\end{enumerate}
	\end{remark}
	Here we will give a relation between the essential spectrum of a self-adjoint invertible operator and its inverse.
\begin{lemma}\label{lem1}
Let $T\in \mathcal{B}(H)$ be self-adjoint and $T^{-1}\in \mathcal{B}(H)$. Then $\lambda\in\sigma_{ess}(T)$ if and only if $\lambda^{-1}\in\sigma_{ess}(T^{-1})$.
\end{lemma}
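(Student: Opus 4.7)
The plan is to translate the characterization of the essential spectrum for self-adjoint operators (Theorem \ref{thm8}) through the map $\phi(\mu) = \mu^{-1}$, using the fact that both $T$ and $T^{-1}$ are self-adjoint operators in $\mathcal{B}(H)$ and that $0 \notin \sigma(T)$. The cleanest formulation is via the equivalent characterization $\sigma_{ess}(T) = \sigma(T)\setminus \sigma_d(T)$, so the task reduces to showing that $\phi$ bijects $\sigma(T)$ onto $\sigma(T^{-1})$ and, within this correspondence, matches $\sigma_d(T)$ with $\sigma_d(T^{-1})$.

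First, I would note that $T^{-1}$ is self-adjoint (standard), so Theorem \ref{thm8} applies to both operators. Since $T$ is invertible, $\sigma(T)$ is a compact subset of $\mathbb{R}\setminus\{0\}$ and bounded away from $0$; thus $\phi(\mu) = \mu^{-1}$ is a homeomorphism from $\sigma(T)$ onto its image, and by the spectral mapping theorem applied to the continuous function $\phi$ on $\sigma(T)$, we have $\sigma(T^{-1}) = \phi(\sigma(T))$. In particular, a point $\mu\in\sigma(T)$ is isolated if and only if $\mu^{-1}\in\sigma(T^{-1})$ is isolated.

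Next I would observe the elementary identity $N(T-\mu I)=N(T^{-1}-\mu^{-1}I)$ for every $\mu\neq 0$: from $Tx=\mu x$ we get $x=\mu T^{-1}x$, i.e.\ $T^{-1}x=\mu^{-1}x$, and the reverse is analogous. Thus $\mu\in\sigma_p(T)$ iff $\mu^{-1}\in\sigma_p(T^{-1})$, and the geometric (hence, for self-adjoint operators, the total) multiplicities coincide. Combining this with the isolation correspondence from the previous step gives $\mu\in\sigma_d(T)$ iff $\mu^{-1}\in\sigma_d(T^{-1})$.

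Finally, since $\phi$ is a bijection between $\sigma(T)$ and $\sigma(T^{-1})$ carrying $\sigma_d(T)$ onto $\sigma_d(T^{-1})$, it must also carry the complements $\sigma_{ess}(T)=\sigma(T)\setminus\sigma_d(T)$ and $\sigma_{ess}(T^{-1})=\sigma(T^{-1})\setminus\sigma_d(T^{-1})$ onto one another, which is the claim. There is no real obstacle in this proof; the only mildly delicate point is to make sure the map $\phi$ genuinely preserves isolation (guaranteed by $\inf_{\mu\in\sigma(T)}|\mu|>0$) and preserves the point spectrum with multiplicities, both of which are handled by the two observations above.
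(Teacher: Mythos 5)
Your proof is correct, but it is organized differently from the paper's. The paper argues directly on the essential spectrum: it takes $\lambda\in\sigma_{ess}(T)$ and runs through the three cases of Theorem (\ref{thm8}) --- infinite-multiplicity eigenvalue, limit point of $\sigma_p(T)$, and continuous spectrum --- verifying that each condition is carried over to $\lambda^{-1}$ for $T^{-1}$ (the last case needing the emptiness of $\sigma_r(T^{-1})$, and the second needing $0\notin\sigma(T)$ so that $\lambda_n^{-1}\to\lambda^{-1}$). You instead work with the complement, showing that the homeomorphism $\mu\mapsto\mu^{-1}$ of $\sigma(T)$ onto $\sigma(T^{-1})$ preserves isolation and, via the identity $N(T-\mu I)=N(T^{-1}-\mu^{-1}I)$, carries $\sigma_d(T)$ onto $\sigma_d(T^{-1})$; then $\sigma_{ess}=\sigma\setminus\sigma_d$ finishes the argument. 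Both routes rest on the same two elementary facts (the spectral mapping $\sigma(T^{-1})=\{\mu^{-1}:\mu\in\sigma(T)\}$ and the exact correspondence of eigenspaces), but your dual formulation buys a cleaner case analysis: you never have to touch the continuous spectrum or invoke the absence of residual spectrum, at the mild cost of having to justify that isolation of spectral points is preserved, which your observation that $\sigma(T)$ is bounded away from $0$ handles correctly.
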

\begin{proof}
	It is enough to prove that $\lambda\in\sigma_{ess}(T)$ implies $\lambda^{-1}\in\sigma_{ess}(T^{-1})$, as $(T^{-1})^{-1}=T$.
	
We know that $\sigma(T^{-1})=\{\lambda^{-1}:\lambda\in\sigma(T)\}$ and $\sigma_p(T^{-1})=\{\lambda^{-1}:\lambda\in\sigma_p(T)\}$.
If $\lambda\in\sigma_{ess}(T)$ then either $\lambda$ is an eigenvalue of $T$ with infinite multiplicity or $\lambda$ is a limit point of $\sigma_p(T)$ or $\lambda\in\sigma_c(T)$.

 If $\lambda$ is an eigenvalue of $T$ with infinite multiplicity, then $1/\lambda$ is an eigenvalue of $T^{-1}$ of infinite multiplicity with same eigenvectors. Hence $\lambda^{-1}\in \sigma_{ess}(T^{-1})$.

 If $\lambda$ is a limit point of $\sigma_p(T)$, then there exist a sequence $(\lambda_n)\subseteq\sigma_p(T)$ such that $\lambda_n$ converges to $\lambda$. Since $0\notin\sigma(T)$, so $\lambda_n,\lambda\neq 0$ for every $n\in \mathbb{N}$ and hence $1/\lambda_n$ converges to $1/\lambda.$ So $1/\lambda\in\sigma_{ess}(T^{-1})$ as $1/\lambda_n\in\sigma_p(T^{-1})$.

 If $\lambda\in\sigma_c(T)$ i.e. $\lambda\notin \sigma_p(T)$, so $1/\lambda\notin\sigma_p(T^{-1})$ and hence $1/\lambda\in\sigma_c(T^{-1})$, because $\sigma_r(T^{-1})$ is empty. Hence $1/\lambda\in\sigma_{ess}(T^{-1}).\qedhere$
\end{proof}
\begin{remark}\label{REM4}
	Let $T\in\mathcal{B}(H)$. If $N(T)$ be a reducing subspace for $T$, then $\sigma(T_0)\subseteq\sigma(T)\subseteq\sigma(T_0)\cup\{0\}$, where $T_0=T\big|_{N(T)^{\perp}}$.
\end{remark}
\begin{proof}
	This is clear from (1) of Theorem (\ref{thm9}).
\end{proof}
\begin{lemma}\label{lemma2}
	Let $T\in\mathcal{B}(H)$ be self adjoint and $T_0=T\big|_{{N(T)^{\perp}}}$. Then
	\begin{enumerate}
		\item $\sigma_{ess}(T_0)\subseteq\sigma_{ess}(T)\subseteq\sigma_{ess}(T_0)\cup\{0\}$.
		\item $\sigma_{d}(T_0)\subseteq\sigma_{d}(T)\subseteq\sigma_{d}(T_0)\cup\{0\}$.
	\end{enumerate}
\end{lemma}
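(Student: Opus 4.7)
The plan is to exploit the decomposition afforded by the fact that, since $T$ is self-adjoint, $N(T)=R(T)^{\perp}$ is a reducing subspace for $T$, so $T=0_{N(T)}\oplus T_0$ on $H=N(T)\oplus N(T)^{\perp}$. If $N(T)=\{0\}$ then $T=T_0$ and both assertions are tautologies, so I assume $N(T)\neq\{0\}$. Theorem \ref{thm9}(1),(2) applied to this decomposition then give
\[
\sigma(T)=\sigma(T_0)\cup\{0\},\qquad \sigma_p(T)=\sigma_p(T_0)\cup\{0\}.
\]
Moreover $T_0$ is injective (since $N(T_0)\subseteq N(T)\cap N(T)^{\perp}=\{0\}$), so $0\notin\sigma_p(T_0)$, and both $T$ and $T_0$ are self-adjoint, so their residual spectra are empty. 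With these ingredients in hand, I will dispatch each of the four inclusions in (1) and (2) by a case analysis along the three conditions in Theorem \ref{thm8}, exploiting the fact that whenever $\lambda\neq 0$ everything of interest lives in the $T_0$-block.

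For the forward inclusion $\sigma_{ess}(T_0)\subseteq\sigma_{ess}(T)$ of (1), suppose $\lambda\in\sigma_{ess}(T_0)$. If $\lambda$ is an infinite-multiplicity eigenvalue of $T_0$ or a limit point of $\sigma_p(T_0)$, the same eigenvectors or the same sequence witness the same property for $T$ via $\sigma_p(T_0)\subseteq\sigma_p(T)$. If $\lambda\in\sigma_c(T_0)$ with $\lambda\neq 0$, the $N(T)$-block $-\lambda I_{N(T)}$ is invertible, so $T-\lambda I$ inherits injectivity and dense but non-closed range from $T_0-\lambda I_{N(T)^{\perp}}$, and emptiness of $\sigma_r(T)$ places $\lambda\in\sigma_c(T)\subseteq\sigma_{ess}(T)$. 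For the reverse inclusion, take $\lambda\in\sigma_{ess}(T)$ with $\lambda\neq 0$: eigenspaces for $\lambda$ are orthogonal to $N(T)$ by a short self-adjointness computation, so they sit inside $N(T)^{\perp}$ and transfer verbatim to $T_0$; an approximating sequence in $\sigma_p(T)=\sigma_p(T_0)\cup\{0\}$ can be chosen entirely in $\sigma_p(T_0)$ by discarding at most one term; and a point of $\sigma_c(T)$ with $\lambda\neq 0$ lies in $\sigma(T_0)\setminus\sigma_p(T_0)=\sigma_c(T_0)$.

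Part (2) is more routine book-keeping: any $\lambda\in\sigma_d(T_0)$ is automatically nonzero, its eigenspace coincides in $T$ and $T_0$, and an open ball isolating $\lambda$ in $\sigma(T_0)$, shrunk to exclude $0$, still isolates $\lambda$ in $\sigma(T)=\sigma(T_0)\cup\{0\}$; the reverse inclusion is symmetric for $\lambda\neq 0$. The one genuine obstacle is the case $\lambda=0\in\sigma_c(T_0)$ in the forward direction of (1), where Theorem \ref{thm9}(3) cannot be invoked because its disjointness hypothesis fails at $0$. The way around this is the standard spectral-theoretic fact that an isolated point of the spectrum of a self-adjoint operator is an eigenvalue: were $0$ isolated in $\sigma(T)=\sigma(T_0)\cup\{0\}$, it would be isolated in $\sigma(T_0)$ as well and hence an eigenvalue of $T_0$, contradicting $0\in\sigma_c(T_0)$. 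So $0$ is not isolated in $\sigma(T)$, and since $0\in\sigma_p(T)$ already, this prevents $0\in\sigma_d(T)$, forcing $0\in\sigma_{ess}(T)$ by Theorem \ref{thm8}.
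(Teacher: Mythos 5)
Your proof is correct and follows essentially the same route as the paper: decompose $T=0_{N(T)}\oplus T_0$, deduce $\sigma(T)=\sigma(T_0)\cup\{0\}$ and $\sigma_p(T)=\sigma_p(T_0)\cup\{0\}$, and run the three-way case analysis of Theorem \ref{thm8}. In fact your handling of the one delicate case $0\in\sigma_c(T_0)$ (where $0$ becomes an eigenvalue of $T$, so the paper's blanket claim $\sigma_c(T_0)\subseteq\sigma_c(T)$ fails, and one must instead argue that $0$ is a non-isolated point of $\sigma(T)$ and hence not in $\sigma_d(T)$) is more careful than the published argument.
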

\begin{proof} Since $N(T)$ is reducing, we have
	\[
	T=
	\begin{bmatrix}
	0 & 0\\
	0 & T_0
	\end{bmatrix}
	.\]
	\begin{enumerate}
		\item Note that as $T$ is self-adjoint, $N(T)$ reduces $T$. If $\lambda\in\mathbb{R}$, then
			\[
		T-\lambda I=
		\begin{bmatrix}
		-\lambda I_{N(T)} & 0\\
		0 & T_0-\lambda I_{N(T)^{\perp}}
		\end{bmatrix}.
		\]
		 Using Theorem (\ref{thm7}), we have
		\begin{equation}\label{eq1}
		N(T-\lambda I)=N\left(-\lambda I_{N(T)}\right)\oplus N\left(T_0-\lambda I_{N(T)^{\perp}}\right).
		\end{equation}
		and
		$ R(T-\lambda I)=N(T)\cup R(T_0-\lambda I_{N(T)^{\perp}})$. Thus we can conclude that $\sigma_p(T_0)\subseteq\sigma_p(T)$ and $\sigma_c(T_0)\subseteq\sigma_c(T)$.
	
		Let $\lambda\in\sigma_{ess}(T_0)$. By Theorem (\ref{thm8}), one of the following hold
		\begin{enumerate}
			\item $\lambda\in\sigma_c(T_0)$.
			\item $\lambda$ is an eigenvalue of $T_0$ of infinite multiplicity.
			\item $\lambda$ is a limit point of $\sigma_p(T_0)$.
		\end{enumerate}
	Using above argument, we get $\lambda\in\sigma_{ess}(T)$. Second containment follows from Theorem (\ref{thm9}), which says that $\sigma_p(T)=\sigma_p(T_0)\cup\{0\}$ and $\sigma_c(T)=\sigma_c(T_0)\cup\{0\}$.
	\item From Equation (\ref{eq1}), we get $\sigma_d(T)\subseteq\sigma_d(T_0)\cup\{0\}$. Other containment follows from Remark (\ref{REM4}) and Equation (\ref{eq1}).$\qedhere$
	\end{enumerate}
\end{proof}
Next we will prove that Lemma (\ref{lem1}) can be generalized to $T^{\dagger}$.
\begin{lemma}\label{lem2}
Let $T\in \mathcal{B}(H)$. Suppose $T$ is a self-adjoint operator with closed range. Let $0\neq\lambda\in\mathbb{R}$. Then
\begin{enumerate}
\item $\lambda\in\sigma_{ess}(T)$ if and only if $1/\lambda\in\sigma_{ess}(T^{\dagger})$.
\item $0\in\sigma_{ess}(T)$ if and only if $0\in\sigma_{ess}(T^{\dagger})$.
\end{enumerate}
\end{lemma}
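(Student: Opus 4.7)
The plan is to reduce both parts to the already-proven Lemma \ref{lem1} (the invertible self-adjoint case) by using Lemma \ref{lemma2} to split off the kernel. The closed-range hypothesis forces $T_0 := T\big|_{N(T)^\perp} : N(T)^\perp \to N(T)^\perp$ to be self-adjoint with $T_0^{-1} \in \mathcal{B}(N(T)^\perp)$, and a direct computation identifies $T^\dagger\big|_{N(T)^\perp}$ with $T_0^{-1}$. Once that is in hand, the two statements become bookkeeping with Lemma \ref{lemma2} and Lemma \ref{lem1}.

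First I would establish the setup. Since $T = T^*$, the subspace $N(T)$ reduces $T$; since $T$ has closed range, $T_0$ is self-adjoint and bijective on $N(T)^\perp$, hence $T_0^{-1} \in \mathcal{B}(N(T)^\perp)$. By Theorem \ref{thmben}, $T^\dagger$ is self-adjoint (since $(T^\dagger)^* = (T^*)^\dagger = T^\dagger$) and $N(T^\dagger) = N(T^*) = N(T)$, so $N(T)$ also reduces $T^\dagger$. Using $T T^\dagger = P_{R(T)} = P_{N(T)^\perp}$ together with $R(T^\dagger) = N(T)^\perp$, one checks that $T^\dagger\big|_{N(T)} = 0$ and $T^\dagger\big|_{N(T)^\perp} = T_0^{-1}$. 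In matrix form,
\[
T = \begin{bmatrix} 0 & 0\\ 0 & T_0\end{bmatrix}, \qquad T^\dagger = \begin{bmatrix} 0 & 0\\ 0 & T_0^{-1}\end{bmatrix}.
\]

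For part (1), fix $\lambda \neq 0$. Since $T_0$ is invertible, $0 \notin \sigma_{ess}(T_0)$, so by Lemma \ref{lemma2}(1) applied to $T$ we have $\lambda \in \sigma_{ess}(T) \iff \lambda \in \sigma_{ess}(T_0)$. By Lemma \ref{lem1} applied to the self-adjoint invertible operator $T_0$, $\lambda \in \sigma_{ess}(T_0) \iff 1/\lambda \in \sigma_{ess}(T_0^{-1})$. Finally, applying Lemma \ref{lemma2}(1) to $T^\dagger$ (with the same reducing decomposition) and using $1/\lambda \neq 0$ gives $1/\lambda \in \sigma_{ess}(T_0^{-1}) \iff 1/\lambda \in \sigma_{ess}(T^\dagger)$. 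Chaining these equivalences proves (1).

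For part (2), I would analyze when $0 \in \sigma_{ess}(T)$ using Theorem \ref{thm8}. Since $T_0$ is bounded below by $1/\|T_0^{-1}\| > 0$, its spectrum is bounded away from $0$; thus $0$ cannot be a limit of nonzero eigenvalues of $T$ (any nonzero eigenvalue lies in $\sigma_p(T_0)$), and $0 \notin \sigma_c(T)$ (either $N(T) = \{0\}$, so $T$ is invertible and $0\notin\sigma(T)$, or $0 \in \sigma_p(T)$). Hence $0 \in \sigma_{ess}(T)$ iff $0$ is an eigenvalue of infinite multiplicity, i.e.\ $\dim N(T) = \infty$. The same reasoning applied to $T^\dagger$ (which is self-adjoint with closed range $N(T)^\perp$ and kernel $N(T^\dagger) = N(T)$) yields $0 \in \sigma_{ess}(T^\dagger) \iff \dim N(T^\dagger) = \infty \iff \dim N(T) = \infty$, and the two conditions coincide.

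The main obstacle is really the preliminary identification $T^\dagger\big|_{N(T)^\perp} = T_0^{-1}$ and the verification that $N(T)$ reduces $T^\dagger$; once these structural facts are in place, the spectral reductions are essentially automatic from the earlier lemmas. The only subtlety in part (2) is ruling out $0$ as a limit point of $\sigma_p(T)$, which uses the closed-range hypothesis through the lower bound on $T_0$.
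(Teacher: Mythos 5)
Your proposal is correct and follows essentially the same route as the paper: part (1) is the identical chain $\lambda\in\sigma_{ess}(T)\Leftrightarrow\lambda\in\sigma_{ess}(T_0)\Leftrightarrow 1/\lambda\in\sigma_{ess}(T_0^{-1})\Leftrightarrow 1/\lambda\in\sigma_{ess}(T^{\dagger})$ via Lemmas \ref{lemma2} and \ref{lem1}, and part (2) likewise reduces to $0$ being an eigenvalue of infinite multiplicity together with $N(T)=N(T^{\dagger})$. The only cosmetic difference is that you justify ``$0$ is not a limit point of $\sigma_p(T)$'' by the lower bound on $T_0$, where the paper cites \cite[Theorem 4.4]{KUL}.
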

\begin{proof}
Let $T_0=T\big|_{N(T)\bot}$. It is enough to show one way implication in both the cases, as $(T^{\dagger})^{\dagger}=T$.
\begin{enumerate}
	\item As $\lambda\neq 0$, by Lemma (\ref{lemma2}), $\lambda\in\sigma_{ess}(T)$ if and only if $\lambda\in\sigma_{ess}(T_0)$. Using Lemma (\ref{lem1}), we get  $1/\lambda\in\sigma_{ess}(T_0^{-1})$. Hence $1/\lambda\in\sigma_{ess}(T^{\dagger})$, by $(2)$ of Lemma (\ref{lemma2}).
	\item Let $0\in\sigma_{ess}(T)$. As $T$ is self-adjoint and  $R(T)$ is closed so $0\notin\sigma_{c}(T)$ and by \cite[Theorem 4.4]{KUL}, $0$ is not a limit point of $\sigma_p(T)$. So $0$ is an eigenvalue of $T$ with infinite multiplicity. As $T$ is self-adjoint, we have $N(T)=N(T^{\dagger})$. So $0$ is an eigenvalue of $T^{\dagger}$ with infinite multiplicity. Hence $0\in\sigma_{ess}(T^{\dagger}).\qedhere$
\end{enumerate}
\end{proof}
\begin{remark}\label{REM3}
Let $T$ be as in Lemma (\ref{lem2}). Define
\begin{align*}
\lambda^{\dagger}=
\begin{cases}
\lambda^{-1}&\text{if }\lambda\neq0,\\
0&\text{if }\lambda=0.\\
\end{cases}
\end{align*}
\begin{enumerate}
	\item By Lemma (\ref{lem2}), we can conclude that $\lambda\in\sigma_{ess}(T)$ if and only if $\lambda^{\dagger}\in\sigma_{ess}(T^{\dagger})$.
	\item Since $\sigma(T)=\sigma_d(T)\cup\sigma_{ess}(T)$, we can also conclude that $\lambda\in\sigma_d(T)$ if and only if $\lambda^{\dagger}\in\sigma_d(T^{\dagger})$.
\end{enumerate}
\end{remark}
\begin{proposition}\label{REM2}
	Let $T\in\mathcal{AM}(H_1)_+$ and $H_2$ be a finite dimensional Hilbert space. If $S\in B(H_2)_+$, then $S\oplus T\in\mathcal{AM}(H_2\oplus H_1)_+$.
\end{proposition}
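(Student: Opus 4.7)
The plan is to invoke Theorem \ref{thm6}, the structural characterization of positive $\mathcal{AM}$-operators, in both directions. Since $T\in\mathcal{AM}(H_1)_+$, that theorem produces a scalar $\beta\geq 0$, an operator $K\in\mathcal{K}(H_1)_+$ with $\|K\|\leq\beta$, and $F\in\mathcal{F}(H_1)_+$ satisfying $KF=FK=0$, such that $T=\beta I_{H_1}-K+F$. The goal is to build an analogous decomposition for $S\oplus T$ using the same $\beta$, and then conclude by the converse direction of Theorem \ref{thm6}.

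The next step is to decompose $S$ compatibly. Because $H_2$ is finite-dimensional and $S\geq 0$, the spectral theorem gives mutually orthogonal spectral projections $E_\mu$ summing to $I_{H_2}$ with $S=\sum_\mu \mu E_\mu$. I would set
\[
\tilde K=\sum_{\mu\leq\beta}(\beta-\mu)\,E_\mu,\qquad \tilde F=\sum_{\mu>\beta}(\mu-\beta)\,E_\mu.
\]
Then $\tilde K$ and $\tilde F$ are positive finite-rank operators with mutually orthogonal ranges, so in particular $\tilde K\tilde F=\tilde F\tilde K=0$; moreover $\|\tilde K\|=\max_{\mu\leq\beta}(\beta-\mu)\leq\beta$ because every $\mu$ is non-negative; and by construction $S=\beta I_{H_2}-\tilde K+\tilde F$.

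Combining the two decompositions gives
\[
S\oplus T=\beta\,I_{H_2\oplus H_1}-(\tilde K\oplus K)+(\tilde F\oplus F).
\]
Setting $K'=\tilde K\oplus K$ and $F'=\tilde F\oplus F$, the verification is routine: $K'$ is positive and compact (a direct sum of a finite-rank operator and a compact operator) with $\|K'\|=\max\{\|\tilde K\|,\|K\|\}\leq\beta$; $F'$ is positive and finite-rank since both summands are; and $K'F'=F'K'=0$ follows from the corresponding componentwise identities. Applying Theorem \ref{thm6} in the reverse direction then yields $S\oplus T\in\mathcal{AM}(H_2\oplus H_1)_+$.

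There is no real obstacle here; the only case deserving a glance is $\beta=0$, which by Remark \ref{REM1} forces $K=0$, and then also $\tilde K=0$. In that case $S\oplus T$ reduces to the positive operator $\tilde F\oplus F$, which is finite-rank since $H_2$ is finite-dimensional and $F\in\mathcal{F}(H_1)_+$, so it trivially satisfies the characterization of Theorem \ref{thm6} with $\beta'=0$.
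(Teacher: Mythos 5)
Your proof is correct, and it follows the same overall strategy as the paper: apply Theorem \ref{thm6} to $T$, absorb $S$ into the resulting decomposition of $S\oplus T$, and invoke Theorem \ref{thm6} in the converse direction. The difference is in how $S$ is absorbed, and here your version is actually the more careful one. The paper simply writes
\[
S\oplus T=\beta I-\begin{bmatrix}0&0\\0&K\end{bmatrix}+\begin{bmatrix}S-\beta I_{H_2}&0\\0&F\end{bmatrix},
\]
placing $S-\beta I_{H_2}$ wholesale into the finite-rank \emph{positive} slot; but $S-\beta I_{H_2}$ need not be positive (any eigenvalue of $S$ below $\beta$ violates this), so as written the hypotheses of Theorem \ref{thm6} are not verified. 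Your spectral splitting of $S$ into $\tilde K=\sum_{\mu\leq\beta}(\beta-\mu)E_\mu$ and $\tilde F=\sum_{\mu>\beta}(\mu-\beta)E_\mu$ routes the deficit below $\beta$ into the compact positive part (where $0\leq\beta-\mu\leq\beta$ gives the required norm bound) and the excess above $\beta$ into the finite-rank positive part, with orthogonal ranges guaranteeing $K'F'=F'K'=0$ componentwise. So the two arguments buy the same conclusion by the same structure theorem, but yours closes a positivity gap in the published one; the concluding remark about $\beta=0$ is harmless but not needed, since the general argument already covers it.
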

\begin{proof}
	Let $T\in\mathcal{AM}(H_1)_+$. By Theorem (\ref{thm6}), $T=\beta I_{H_1}-K+F$, where $\beta\geq 0$, $K\in\mathcal{K}(H_1)_+$ with $\|K\|\leq\beta$ and $F\in\mathcal{F}(H_1)_+$ satisfying $KF=FK=0$. Then
	\begin{align*}
    S\oplus T & =
	\begin{bmatrix}
	S & 0\\
	0 & \beta I_{H_1}-K+F
	\end{bmatrix}\\
	& =
	{\beta I-
	\begin{bmatrix}
	0 & 0\\
	0 & K
	\end{bmatrix}
	+
	\begin{bmatrix}
	S-\beta I_{H_2} & 0\\
	0 & F
	\end{bmatrix}}\\
	& =\beta I- \tilde{K}+\tilde{F},
	\end{align*}
	where
	\[
	\tilde{K}=
	\begin{bmatrix}
	0 & 0\\
	0 & K
	\end{bmatrix}
	\text{ and }
	\tilde{F}=
	\begin{bmatrix}
	S-\beta I_{H_2} & 0\\
	0 & F
	\end{bmatrix}.
	\]
	Again by using Theorem (\ref{thm6}), we get $S\oplus T\in\mathcal{AM}(H_2\oplus H_1)_+.\qedhere$
	\end{proof}
By similar arguments as above, we can prove the following Remark for $\mathcal{AN}$-operators.
\begin{remark}
	Let $T\in\mathcal{AN}(H_1)_+$ and $H_2$ be a finite dimensional Hilbert space. If $S\in B(H_2)_+$, then $S\oplus T\in\mathcal{AN}(H_2\oplus H_1)_+$.
\end{remark}
Now we will generalize the result of \cite[Theorem 5.1]{KUL1}, to any bounded linear operator, by dropping the injectivity condition.
\begin{theorem}\label{thm2}
Let $T\in \mathcal{B}(H)$ be a positive operator. Then $T\in \mathcal{AM}(H)$ if and only if $R(T)$ is closed and $T^{\dagger}\in \mathcal{AN}(H)$.
\end{theorem}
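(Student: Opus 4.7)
The plan is to exploit the structural characterization in Theorem \ref{thm6}, which reduces positive $\mathcal{AM}$-operators to the concrete form $T = \beta I - K + F$. The orthogonality $KF = FK = 0$ together with $K, F \geq 0$ forces $R(K) \perp R(F)$, so $H$ splits as $R(F) \oplus R(K) \oplus M$ with $M = N(K) \cap N(F)$, and on this decomposition $T$ is block-diagonal with blocks $\beta I + F_1$, $\beta I - K_1$, and $\beta I$ respectively. This decomposition is the backbone of both directions.

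For the forward direction, I would first dispatch the closed range assertion. The case $\beta = 0$ is quick: on $R(K)$ the operator reduces to $-K_1 \geq 0$ with $K_1 \geq 0$, forcing $K = 0$, so $T = F$ is finite-rank and $R(T)$ is trivially closed (in fact this case also appears in Remark \ref{REM1}(1)). For $\beta > 0$, the blocks on $R(F)$ and $M$ are invertible, while on $R(K)$ the block $\beta I - K_1$ has closed range since nonzero eigenvalues of the compact positive operator $K_1$ are isolated with finite multiplicity, making $\beta$ either absent from or isolated in $\sigma(K_1)$. Theorem \ref{thm7}(2) then glues the three closed ranges into $R(T)$ closed.

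Next I would compute $T^\dagger$ blockwise. Setting $\alpha = \beta^{-1}$, the computation yields $T^\dagger = \alpha I + K' - F'$, where $K'$ is a positive compact operator supported on $R(K) \ominus N(T)$ whose eigenvalues $k_i/(\beta(\beta - k_i))$ tend to $0$, and $F'$ is a positive finite-rank operator supported on $R(F) \oplus N(T)$ satisfying $\|F'\| \leq \alpha$ and $K'F' = F'K' = 0$ by disjoint supports. Invoking the known Pandey–Paulsen/Ramesh characterization of positive $\mathcal{AN}$-operators (the structural analogue of Theorem \ref{thm6}, referenced in the introduction via \cite{CAR1,PAL,RAM,RAM1}), this form certifies $T^\dagger \in \mathcal{AN}(H)_+$.

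For the converse, if $R(T)$ is closed and $T^\dagger \in \mathcal{AN}(H)$, then since $T \geq 0$ with closed range, $T^\dagger$ acts as $(T|_{N(T)^\perp})^{-1}$ on $N(T)^\perp$ and vanishes on $N(T)$, hence is self-adjoint and positive; thus $T^\dagger \in \mathcal{AN}(H)_+$. Applying the $\mathcal{AN}_+$ characterization writes $T^\dagger = \alpha I + K' - F'$, and repeating the blockwise Moore–Penrose computation on $(T^\dagger)^\dagger = T$ reconstructs $T = \beta I - K + F$ satisfying the hypotheses of Theorem \ref{thm6}. The chief technical obstacle is the bookkeeping in the blockwise $T^\dagger$ computation: the piece $\alpha I_{N(T)^\perp}$ must be rewritten as $\alpha I - \alpha P_{N(T)}$, with the $\alpha P_{N(T)}$ summand absorbed into $F'$. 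This step requires $N(T)$ to be finite-dimensional, which is exactly guaranteed by the compactness of $K$ and $\beta > 0$, and the orthogonality and norm constraints on $K'$ and $F'$ must be tracked carefully to match Theorem \ref{thm6}.
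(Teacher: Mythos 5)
Your proposal is correct in substance but takes a genuinely different route from the paper's proof. The paper isolates the kernel: it quotes \cite[Proposition 3.3]{GAN} for closedness of $R(T)$, splits into the cases $\dim N(T)=\infty$ (where $\beta=0$ forces $T$, hence $T^{\dagger}$, to be finite rank) and $\dim N(T)<\infty$ (where it applies the already-established injective case \cite[Theorem 5.1]{KUL1} to $T_0=T\big|_{N(T)^{\perp}}$ and glues the finite-dimensional kernel block back on via Proposition \ref{REM2}); the converse runs the same two cases through \cite[Theorem 5.1]{PAL} and \cite[Theorem 5.1]{KUL1} again. You instead work directly with the three-block decomposition $H=R(F)\oplus\overline{R(K)}\oplus\left(N(K)\cap N(F)\right)$ forced by $KF=FK=0$, compute $T^{\dagger}$ explicitly block by block, and match the result against the refined structure theorem for $\mathcal{AN}(H)_+$ (the form $\alpha I+K'-F'$ with $K'F'=F'K'=0$ and $\|F'\|\leq\alpha$). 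What your route buys: it is self-contained modulo the two structure theorems --- you re-prove closedness of $R(T)$ rather than citing \cite{GAN}, and you need neither \cite[Theorem 5.1]{KUL1} nor Proposition \ref{REM2}. What it costs: you need the sharper $\mathcal{AN}_+$ characterization with the orthogonality and norm constraints (the paper's converse gets by with the weaker Pandey--Paulsen form $\alpha I+K+F$), and the bookkeeping you flag --- absorbing $\alpha P_{N(T)}$ into $F'$, checking $\dim N(T)<\infty$ when $\beta>0$ via $N(T)\subseteq N(K-\beta I)$, and verifying that $K'$ is compact because only finitely many eigenvalues of $K$ lie near $\beta$ --- must genuinely be carried out. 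The one place you should add a line is the converse when $\alpha=0$: there $\|F'\|\leq\alpha$ forces $F'=0$, so $T^{\dagger}$ is compact with closed range $N(T)^{\perp}$, hence finite rank, hence $T$ is finite rank and trivially $\mathcal{AM}$; your phrase about ``repeating the blockwise computation'' presupposes $\beta=\alpha^{-1}$ and silently skips this degenerate case.
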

\begin{proof}
Let $T\in \mathcal{AM}(H)$ and $T_0=T\big|_{N(T)^\perp}$. Then by \cite[Proposition 3.3]{GAN}, $R(T)$ is closed and by Theorem (\ref{thm6}), $T=\beta I-K+F$ where $\beta\geq0$, $K\in \mathcal{K}(H)_+$, $F\in \mathcal{F}(H)_+$ satisfying $KF=0=FK$ and $\|K\|\leq \beta$. We consider the following two cases which exhaust all possibilities.

Case (1): Let $N(T)$ be infinite dimensional. We get $\beta=0$ and $T$ is finite-rank operator. By \cite[Theorem 3.2]{KAR}, $T^{\dagger}$ is finite-rank and hence $T^{\dagger}\in \mathcal{AN}(H)$.

Case (2): Let $N(T)$ be finite dimensional. Since $T$ is $\mathcal{AM}$-operator, so $T_0\in \mathcal{AM}(N(T)^{\perp})$. By \cite[Theorem 5.1]{KUL1}, $T_0^{-1}\in \mathcal{AN}(N(T)^{\perp})$. Using Proposition \ref{REM2}, we get
\[
T^{\dagger}=
\begin{bmatrix}
0 & 0\\
0 & T_0^{-1}
\end{bmatrix}\in \mathcal{AN}(H).
\]

Conversely, assume that $R(T)$ is closed and $T^{\dagger}\in \mathcal{AN}(H)$. By \cite[Theorem 5.1]{PAL}, we get $T^{\dagger}=\alpha I+K+F$, where $\alpha\geq0,\,K\in \mathcal{K}(H)_+$ and $F\in \mathcal{F}(H)$ is a self-adjoint operator.

Case (1): Let $N(T)$ be infinite dimensional. Then $0$ is an eigenvalue of $T$ with infinite multiplicity. So, $\alpha=0$ and $T^{\dagger}$ is compact. Since restriction of a compact operator to a closed subspace is compact, $T_0^{-1}$ is compact. We know that a compact operator is invertible if its domain is finite dimensional, so $R(T)$ is finite dimensional and hence T is finite-rank operator. Thus $T\in \mathcal{AM}(H)$.

Case (2): Let $N(T)$ be finite dimensional. Since $T^{\dagger}\in \mathcal{AN}(H),\,T_0^{-1}\in \mathcal{AN}(N(T)^{\perp})$. By \cite[Theorem 5.1]{KUL1}, $T_0\in \mathcal{AM}(N(T)^{\perp})$. Since $N(T)$ is finite dimensional, by Proposition \ref{REM2}, $T\in \mathcal{AM}(H).\qedhere$
\end{proof}
\begin{lemma}\label{lemma3}
Let $T\in \mathcal{B}(H)$ be a positive operator. If $\sigma_{ess}(T)$ is singleton and $(m_e(T),\|T\|]$ contains only finitely many eigenvalues of $T$, then $R(T)$ is closed.
\end{lemma}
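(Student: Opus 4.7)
The plan is to convert the closed-range question into a spectral-gap question and then read the gap off from the decomposition $\sigma(T)=\sigma_d(T)\sqcup\sigma_{ess}(T)$ supplied by Theorem \ref{thm8}. Writing $T_0:=T\big|_{N(T)^\perp}$, the operator $T_0$ is positive and injective, so by the characterisation of $m(\cdot)>0$ recorded immediately after the definition of the minimum modulus, together with Remark \ref{REM4}, the range $R(T)=R(T_0)$ is closed if and only if $m(T_0)>0$, which in turn is equivalent to $\inf\bigl(\sigma(T)\setminus\{0\}\bigr)>0$. Hence it suffices to show that $0$ is not an accumulation point of $\sigma(T)\setminus\{0\}$. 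Set $\alpha:=m_e(T)\ge 0$, so by hypothesis $\sigma_{ess}(T)=\{\alpha\}$, and then split on whether $\alpha>0$ or $\alpha=0$.

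If $\alpha>0$, every $\lambda\in\sigma(T)\setminus\{\alpha\}$ lies in $\sigma_d(T)$, and is therefore an isolated eigenvalue of finite multiplicity. Suppose toward a contradiction that some sequence $(\lambda_n)\subseteq\sigma(T)\setminus\{0\}$ converges to $0$; since $\alpha>0$, we may discard finitely many terms and assume $\lambda_n\ne \alpha$, so $\lambda_n\in\sigma_d(T)\subseteq\sigma_p(T)$. Then $0$ is a limit point of $\sigma_p(T)$, and Theorem \ref{thm8} places $0$ in $\sigma_{ess}(T)$, contradicting $\sigma_{ess}(T)=\{\alpha\}$. Hence $0$ is either isolated in $\sigma(T)$ or not in $\sigma(T)$ at all, and the spectral gap at $0$ gives $R(T)$ closed. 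Note that this case does not actually use the finite-eigenvalue hypothesis.

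If $\alpha=0$, then $\sigma(T)\setminus\{0\}\subseteq\sigma_d(T)$ consists of eigenvalues of $T$ lying in $(0,\|T\|]=(m_e(T),\|T\|]$, which is a finite set by hypothesis; its minimum is therefore strictly positive, providing the required gap at $0$. This is the case where the finiteness hypothesis is essential: without it, eigenvalues could accumulate at $0$, which would be perfectly consistent with $\sigma_{ess}(T)=\{0\}$ but would destroy closedness of $R(T)$.

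The only mildly nontrivial ingredient is the opening reduction, namely that a positive operator $T$ has closed range exactly when $\inf\bigl(\sigma(T)\setminus\{0\}\bigr)>0$; this is essentially immediate from the spectral theorem applied to the injective positive operator $T_0$, and I would invoke it without belaboring the details. Once that reduction is in hand, the argument is a short two-case analysis governed entirely by Theorem \ref{thm8}, so there is no substantial obstacle beyond bookkeeping.
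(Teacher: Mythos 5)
Your proof is correct and follows essentially the same route as the paper's: both reduce closedness of $R(T)$ to the absence of spectral accumulation at $0$ (the paper cites \cite{KUL} for this reduction) and then use Theorem \ref{thm8} together with the finiteness hypothesis to rule such accumulation out. Your two-case organisation, which also isolates exactly where the finite-eigenvalue hypothesis is needed (namely when $m_e(T)=0$), is simply a cleaner write-up of the paper's single contradiction argument.
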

\begin{proof}
To prove $R(T)$ is closed, it is enough to show that $0$ is not a limit point of $\sigma(T)$, by \cite[Theorem 4.4]{KUL}. On the contrary assume that $0$ is a limit point of $\sigma(T)$. Since $\sigma_{ess}(T)$ is singleton, $0$ must be a limit point of $\sigma_d(T)$. In fact $0$ is a limit point of a decreasing sequence in $\sigma_p(T)$, this implies $\sigma_{ess}(T)=\{0\}$. Hence the sequence which is converging to $0$ is the zero sequence, which is a contradiction. So $0$ is not a limit point of $\sigma(T)$ and hence $R(T)$ is closed.$\qedhere$
\end{proof}
Here we will give a new characterization for positive $\mathcal{AM}$-operators, which is similar to \cite[Theorem 2.4]{RAM1}.
\begin{theorem}\label{thm3}
Let $T\in \mathcal{B}(H)$ be a positive operator. Then $T\in \mathcal{AM}(H)$ if and only if $\sigma_{ess}(T)$ is singleton and $(m_e(T),\|T\|]$ contains only finitely many eigenvalues of $T$.
\end{theorem}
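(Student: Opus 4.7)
The plan is to prove both directions, leveraging the established characterization of positive $\mathcal{AM}$-operators (Theorem \ref{thm6}) for the forward direction, and the bridge via the Moore--Penrose inverse (Theorem \ref{thm2}) together with the analogous $\mathcal{AN}$-characterization for the converse.

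For the forward direction, assume $T\in\mathcal{AM}(H)_+$. Theorem \ref{thm6} gives $T=\beta I-K+F$ with the standard conditions $KF=FK=0$, $K\in\mathcal{K}(H)_+$, $F\in\mathcal{F}(H)_+$, and $\|K\|\leq \beta$. Remark \ref{REM1}(2) immediately yields $\sigma_{ess}(T)=\{\beta\}$, so $m_e(T)=\beta$. Since $KF=0=FK$ and both $K,F$ are positive, the finite-dimensional subspace $R(F)$ is contained in $N(K)$, and $R(F)^{\perp}$ is invariant under both operators. On $R(F)$ we have $T=\beta I_{R(F)}+F|_{R(F)}$, whose eigenvalues (finitely many, since $\dim R(F)<\infty$) all lie in $[\beta,\|T\|]$. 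On $R(F)^{\perp}$ we have $T=\beta I-K|_{R(F)^{\perp}}$, with spectrum in $[0,\beta]$ because $K\geq 0$ with $\|K\|\leq\beta$. Thus every eigenvalue in $(\beta,\|T\|]$ arises from the finite-dimensional part, hence there are only finitely many.

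For the converse, assume $\sigma_{ess}(T)=\{\beta\}$ and $(\beta,\|T\|]$ contains only finitely many eigenvalues. Lemma \ref{lemma3} gives that $R(T)$ is closed, so $T^{\dagger}\in\mathcal{B}(H)$ is at our disposal. I would split into two cases. If $\beta=0$: since $R(T)$ is closed, by the argument in Lemma \ref{lem2}(2), $0$ must be an eigenvalue of infinite multiplicity, so $T_0=T|_{N(T)^{\perp}}$ is invertible. By Lemma \ref{lemma2}(1), $\sigma_{ess}(T_0)\subseteq\{0\}\setminus\sigma(T_0)=\emptyset$, so $T_0$ has only discrete spectrum, and that spectrum consists of the (by hypothesis, finitely many) eigenvalues of $T$ in $(0,\|T\|]$, each of finite multiplicity. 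Hence $N(T)^{\perp}$ is finite-dimensional, so $T$ is finite-rank; then $T=\beta I-K+F$ with $\beta=0$, $K=0$, $F=T$ satisfies Theorem \ref{thm6}, giving $T\in\mathcal{AM}(H)$.

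If $\beta>0$, the plan is to show $T^{\dagger}\in\mathcal{AN}(H)_+$ and invoke Theorem \ref{thm2}. By Remark \ref{REM3}(1), $\sigma_{ess}(T^{\dagger})=\{1/\beta\}$ is a singleton. Next, $N(T)$ must be finite-dimensional, for otherwise $0$ would be an eigenvalue of $T^{\dagger}$ (via $N(T^{\dagger})=N(T)$) of infinite multiplicity, placing $0\in\sigma_{ess}(T^{\dagger})=\{1/\beta\}$, contradicting $\beta>0$. By Remark \ref{REM3}(2), the nonzero eigenvalues of $T^{\dagger}$ in $[0,1/\beta)$ are exactly the reciprocals of eigenvalues of $T$ in $(\beta,\|T\|]$, so finitely many; together with the eigenvalue $0$ of finite multiplicity $\dim N(T)$, we get that $[0,m_e(T^{\dagger}))$ contains only finitely many eigenvalues of $T^{\dagger}$. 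Then the analog characterization of positive $\mathcal{AN}$-operators from \cite[Theorem 2.4]{RAM1} forces $T^{\dagger}\in\mathcal{AN}(H)$, and Theorem \ref{thm2} completes the proof that $T\in\mathcal{AM}(H)$.

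The delicate step will be the case $\beta>0$ of the converse, specifically verifying that no hidden eigenvalues of $T^{\dagger}$ escape the counting; the reduction via $N(T)=N(T^{\dagger})$ and the dichotomy forced by $\sigma_{ess}(T^{\dagger})$ being a singleton at a positive number are what make the argument close. The case $\beta=0$ and the forward direction are straightforward bookkeeping once Theorem \ref{thm6}, Lemma \ref{lemma3}, and Remark \ref{REM3} are in hand.
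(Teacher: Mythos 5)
Your proof is correct. The direction ``spectral conditions $\Rightarrow T\in\mathcal{AM}(H)$'' is essentially the paper's argument: Lemma \ref{lemma3} for closedness of $R(T)$, the case split $\beta=0$ versus $\beta>0$, transfer of the spectral data to $T^{\dagger}$ via Lemma \ref{lem2} and Remark \ref{REM3}, then \cite[Theorem 2.4]{RAM1} and Theorem \ref{thm2}. Where you genuinely diverge is in the direction ``$T\in\mathcal{AM}(H)\Rightarrow$ spectral conditions'': the paper routes this through the Moore--Penrose inverse as well (it shows $T^{\dagger}\in\mathcal{AN}(H)$ using \cite[Theorem 5.1]{KUL1} and Proposition \ref{REM2}, applies \cite[Theorem 2.4]{RAM1} to $T^{\dagger}$, and transfers back), whereas you read both conditions directly off the representation $T=\beta I-K+F$ of Theorem \ref{thm6}: the singleton essential spectrum comes from Remark \ref{REM1}, and the finiteness of $\sigma_p(T)\cap(\beta,\|T\|]$ from the reducing decomposition $H=R(F)\oplus R(F)^{\perp}$, on which $T$ acts as $\beta I+F|_{R(F)}$ and $\beta I-K|_{R(F)^{\perp}}$ respectively, the latter having spectrum contained in $[0,\beta]$. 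Your version of this direction is more self-contained, avoiding the detour through the $\mathcal{AN}$ theory, and it makes the eigenvalue count above $m_e(T)$ transparent (bounded by $\dim R(F)$); the paper's version keeps the entire theorem inside the single $T\leftrightarrow T^{\dagger}$, $\mathcal{AM}\leftrightarrow\mathcal{AN}$ dictionary. Two cosmetic remarks: in the case $\beta>0$ your appeal to Remark \ref{REM3}(2) should really be to the elementary correspondence between nonzero eigenvalues of $T$ and of $T^{\dagger}$ (that item of the remark concerns the discrete spectrum, not the point spectrum), and in the case $\beta=0$ your conclusion that $N(T)^{\perp}$ is finite dimensional, hence $T$ is finite rank and therefore $\mathcal{AM}$, coincides with the paper's Case (1); neither point affects correctness.
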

\begin{proof}
Let $\sigma_{ess}(T)=\{\beta\}$ and $\lambda_1,\lambda_2,\ldots\lambda_m$ be the eigenvalues of $T$ contained in $(m_e(T),\|T\|]$. By Lemma (\ref{lemma3}), $R(T)$ is closed.

Case (1): Let $\beta=0$. By Lemma (\ref{lem2}), $\sigma_{ess}(T^{\dagger})=\{0\}$ and $[m(T^{\dagger}),m_e(T^{\dagger})]=\{0\}$. So, by \cite[Theorem 2.4]{RAM1}, $T^{\dagger}\in \mathcal{AN}(H)$. Hence by Theorem (\ref{thm2}), $T\in \mathcal{AM}(H)$.

Case (2): Let $\beta>0$. This implies that $\sigma_{ess}(T^{\dagger})=\{1/\beta\}$ and $[m(T^{\dagger}),1/\beta)$ contains only finitely many eigenvalues of $T^{\dagger}$, namely either $0,1/\lambda_1,1/\lambda_2,\ldots1/\lambda_m$ or $1/\lambda_1,1/\lambda_2,\ldots1/\lambda_m$. By \cite[Theorem 2.4]{RAM1}, $T^{\dagger}\in \mathcal{AN}(H)$. Hence by Theorem (\ref{thm2}), $T\in \mathcal{AM}(H)$.

Conversely, let $T\in \mathcal{AM}(H)$ and $T_0=T\big|_{N(T)^\perp}$. By \cite[Theorem 2.4]{RAM1}, it is enough to show that $T^{\dagger}\in \mathcal{AN}(H)$. Since $T\in \mathcal{AM}(H)$, $T_0\in\mathcal{ AM}(N(T)^\perp)$ and by \cite[Theorem 5.1]{KUL1}, $T_0^{-1}$ is $\mathcal{AN}$-operator. Firstly if $N(T)$ is finite dimensional, then $N(T^{\dagger})$ is finite dimensional and hence $T^{\dagger}$ is an $\mathcal{AN}$-operator, by Proposition (\ref{REM2}). Secondly, if $N(T)$ is infinite dimensional, then $T$ is finite-rank operator. By \cite[Theorem 3.2]{KAR}, $T^{\dagger}$ is finite-rank and hence $\mathcal{AN}$-operator.$\qedhere$
\end{proof}
In general, if $T\in\mathcal{AM}(H)$, then $T^*$ need not be an $\mathcal{AM}$-operator (see \cite{CAR,GAN} for more details). The same is true for $\mathcal{AN}$-operators. A sufficient condition to hold this result for $\mathcal{AN}$-operators is given in \cite{RAM1}, which depends on the essential spectrum. A similar condition works for $\mathcal{AM}$-operators too. The details are given below.
\begin{theorem}\label{thm4}
Let $T\in \mathcal{B}(H)$ and $\sigma_{ess}(T^*T)=\sigma_{ess}(TT^*)$. Then
\begin{enumerate}
	\item $TT^*\in \mathcal{AM}(H)$ if and only if $T^*T\in \mathcal{AM}(H)$.
	\item $T\in \mathcal{AM}(H)$ if and only if $T^*\in \mathcal{AM}(H)$.
\end{enumerate}
\end{theorem}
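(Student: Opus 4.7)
The plan is to derive both parts directly from the characterization in Theorem \ref{thm3}, exploiting the classical fact that the nonzero spectra of $T^*T$ and $TT^*$ coincide with identical multiplicities and that $\|T^*T\|=\|TT^*\|=\|T\|^2$.

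For part (1), both $T^*T$ and $TT^*$ are positive operators, so Theorem \ref{thm3} characterizes their $\mathcal{AM}$-membership through two conditions: singleton essential spectrum and finitely many eigenvalues in $(m_e(\cdot),\|\cdot\|]$. The first condition transfers between the two operators immediately from the hypothesis $\sigma_{ess}(T^*T)=\sigma_{ess}(TT^*)$, which also forces $m_e(T^*T)=m_e(TT^*)$. For the second, I would note that any eigenvalue lying in $(m_e(T^*T),\|T\|^2]$ is necessarily nonzero (either because $m_e>0$, or because the interval excludes $0$ when $m_e=0$), and the nonzero eigenvalue lists of $T^*T$ and $TT^*$ agree with multiplicities. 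Hence the eigenvalue counts in the two intervals coincide, and Theorem \ref{thm3} yields the equivalence.

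For part (2), the strategy is to interpose the auxiliary equivalence $T\in\mathcal{AM}(H)\iff T^*T\in\mathcal{AM}(H)$. The first half is geometric: since $\|Tx\|=\||T|x\|$ for all $x\in H$, the restrictions $T|_M$ and $|T|\,|_M$ attain the same minimum at the same vectors, so $T\in\mathcal{AM}(H)$ iff $|T|\in\mathcal{AM}(H)$. The second half is spectral: $|T|$ and $T^*T=|T|^2$ are both positive, and the spectral theorem gives $\sigma_{ess}(T^*T)=\{\lambda^2:\lambda\in\sigma_{ess}(|T|)\}$ together with a bijection between the eigenvalues of $|T|$ in $(m_e(|T|),\||T|\|]$ and the eigenvalues of $T^*T$ in $(m_e(T^*T),\|T^*T\|]$ via $\lambda\mapsto\lambda^2$. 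Consequently, singleton-ness of the essential spectrum and finiteness of the eigenvalue count are preserved, and Theorem \ref{thm3} gives $|T|\in\mathcal{AM}(H)\iff T^*T\in\mathcal{AM}(H)$. The same reasoning applied to $T^*$ yields $T^*\in\mathcal{AM}(H)\iff TT^*\in\mathcal{AM}(H)$. Chaining these two equivalences with part (1) produces the desired conclusion.

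I expect the main obstacle, such as it is, to be the clean bookkeeping in the spectral correspondence under squaring: one must confirm that singleton essential spectra match, that $m_e$ values match under $\lambda\mapsto\lambda^2$, and that the interval $(m_e,\|\cdot\|]$ for $T^*T$ really corresponds to $(m_e,\|\cdot\|]$ for $|T|$ bijectively on eigenvalues. Each of these is a routine consequence of the functional calculus for positive operators, but they must be verified explicitly so that Theorem \ref{thm3} can be invoked for both operators simultaneously. Once this correspondence is in place, the rest of the proof is mechanical.
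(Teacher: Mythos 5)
Your proof is correct, but it follows a genuinely different route from the paper's. The paper proves (1) by passing to Moore--Penrose inverses: Theorem \ref{thm2} converts $TT^*\in\mathcal{AM}(H)$ into the statement that $R(TT^*)$ is closed and $(TT^*)^{\dagger}=(T^{\dagger})^*T^{\dagger}\in\mathcal{AN}(H)$, Lemma \ref{lem2} transfers the hypothesis to $\sigma_{ess}((T^{\dagger})^*T^{\dagger})=\sigma_{ess}(T^{\dagger}(T^{\dagger})^*)$, and the analogous adjoint theorem for $\mathcal{AN}$-operators from \cite{RAM1} closes the loop; part (2) is then obtained by quoting $T\in\mathcal{AM}(H)\iff T^*T\in\mathcal{AM}(H)$ from \cite{GAN}. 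You instead apply the spectral characterization of Theorem \ref{thm3} directly to the positive operators $T^*T$ and $TT^*$, using $\|T^*T\|=\|TT^*\|=\|T\|^2$, the equality $m_e(T^*T)=m_e(TT^*)$ forced by the hypothesis, and the fact that the nonzero eigenvalues of $T^*T$ and $TT^*$ coincide with multiplicities (the interval $(m_e,\|T\|^2]$ excludes $0$, so only nonzero eigenvalues matter); for (2) you reprove the $T\leftrightarrow T^*T$ equivalence via $\|Tx\|=\||T|x\|$ together with the correspondence $\lambda\mapsto\lambda^2$ between the spectral data of $|T|$ and $|T|^2$. Your route stays entirely within the paper's own Theorem \ref{thm3}, avoids the detour through $T^{\dagger}$ (and the closed-range bookkeeping it entails), and does not need the external $\mathcal{AN}$ adjoint theorem; the paper's route is shorter on the page precisely because it outsources that bookkeeping to the already-developed $\mathcal{AN}$ theory. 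In a final write-up you should make explicit the two small facts your argument rests on: that for $\lambda\neq0$ the map $x\mapsto Tx$ is a bijection from $N(T^*T-\lambda I)$ onto $N(TT^*-\lambda I)$ (so multiplicities agree), and that for $\lambda>0$ one has $N(|T|^2-\lambda^2 I)=N(|T|-\lambda I)$ since $|T|^2-\lambda^2 I=(|T|-\lambda I)(|T|+\lambda I)$ with $|T|+\lambda I$ invertible, together with the spectral mapping theorem for the essential spectrum under $t\mapsto t^2$.
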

	\begin{proof}
	By Lemma (\ref{lem2}), $\sigma_{ess}((T^{\dagger})^*T^{\dagger})=\sigma_{ess}(T^{\dagger}(T^{\dagger})^*)$.
	\begin{enumerate}
		\item By Theorem (\ref{thm2}), $TT^*\in \mathcal{AM}(H)$ if and only if $R(TT^*)$ is closed and $(TT^*)^{\dagger}\in \mathcal{AN}(H)$, i.e. $(T^{\dagger})^*T^{\dagger}\in \mathcal{AN}(H)$. Since $\sigma_{ess}((T^{\dagger})^*T^{\dagger})=\sigma_{ess}(T^{\dagger}(T^{\dagger})^*)$, by \cite[Theorem 2.7]{RAM1}, we get $T^{\dagger}(T^{\dagger})^*\in \mathcal{AN}(H)$. Again by applying Theorem (\ref{thm2}), we get $T^*T\in \mathcal{AM}(H)$.
		\item This follows by Case (1) and by \cite[Corollary 4.9]{GAN}, that $T\in \mathcal{AM}(H)$ if and only if $T^*T\in \mathcal{AM}(H).\qedhere$
	\end{enumerate}
	\end{proof}

It is well known that for $T\in B(H),\,\sigma(T^*T)\setminus\{0\}=\sigma(TT^*)\setminus\{0\}$. We can ask whether the same is true if the spectrum is replaced by the essential spectrum. We answer this question affirmatively.
\begin{lemma}\label{lemma8}
	Let $T\in\mathcal{B}(H)$. Then $\sigma_{ess}(T^*T)\setminus\{0\}=\sigma_{ess}(TT^*)\setminus\{0\}$.
\end{lemma}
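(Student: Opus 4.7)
The plan is to reduce the statement to the well known fact that the nonzero parts of $\sigma(T^*T)$ and $\sigma(TT^*)$ coincide, and then to use the three-way characterization of the essential spectrum of a self-adjoint operator given in Theorem \ref{thm8}. Note that $T^*T$ and $TT^*$ are positive (in particular self-adjoint), so Theorem \ref{thm8} applies to both, and $\sigma_r(T^*T)=\sigma_r(TT^*)=\emptyset$.

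The key ingredient I would establish first is a bijective correspondence between the nonzero eigenspaces: for each $\lambda\neq 0$, the map $x\mapsto \lambda^{-1/2}Tx$ sends $N(T^*T-\lambda I)$ into $N(TT^*-\lambda I)$, since if $T^*Tx=\lambda x$ then $TT^*(Tx)=T(T^*Tx)=\lambda Tx$ and $\|Tx\|^{2}=\langle T^*Tx,x\rangle=\lambda\|x\|^{2}$, making it an isometry with inverse $y\mapsto \lambda^{-1/2}T^*y$. In particular, for every $\lambda\neq 0$ one has $\lambda\in\sigma_p(T^*T)\iff\lambda\in\sigma_p(TT^*)$, with eigenspaces of the same (possibly infinite) dimension. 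Coupled with the classical identity $\sigma(T^*T)\setminus\{0\}=\sigma(TT^*)\setminus\{0\}$, this gives $\sigma_c(T^*T)\setminus\{0\}=\sigma_c(TT^*)\setminus\{0\}$ as well, since for self-adjoint operators $\sigma_c=\sigma\setminus\sigma_p$.

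With these preliminaries, I would fix $\lambda\neq 0$ and check the three conditions of Theorem \ref{thm8} symmetrically. (i) If $\lambda$ is an eigenvalue of $T^*T$ of infinite multiplicity, the eigenspace correspondence above shows the same for $TT^*$. (ii) If $\lambda$ is a limit point of $\sigma_p(T^*T)$, pick a sequence $\lambda_n\in\sigma_p(T^*T)$ with $\lambda_n\to\lambda$; since $\lambda\neq 0$, eventually $\lambda_n\neq 0$, and by the correspondence $\lambda_n\in\sigma_p(TT^*)$, so $\lambda$ is a limit point of $\sigma_p(TT^*)$. (iii) If $\lambda\in\sigma_c(T^*T)$, then $\lambda\in\sigma(T^*T)\setminus\{0\}=\sigma(TT^*)\setminus\{0\}$ and $\lambda\notin\sigma_p(TT^*)$, so $\lambda\in\sigma_c(TT^*)$. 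The reverse direction is identical by symmetry. Combining the three cases with Theorem \ref{thm8} yields $\sigma_{ess}(T^*T)\setminus\{0\}=\sigma_{ess}(TT^*)\setminus\{0\}$.

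There is no real obstacle; the only point that requires a little care is making sure the assumption $\lambda\neq 0$ is used in case (ii) to pass from the sequence $\lambda_n$ to one sitting in the common nonzero spectrum, and in case (iii) to invoke the classical nonzero-spectrum identity. Everything else is a direct unpacking of the definition of $\sigma_{ess}$ combined with the eigenspace correspondence.
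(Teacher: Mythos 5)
Your proof is correct and follows essentially the same route as the paper: both arguments invoke the trichotomy of Theorem (\ref{thm8}), the identity $\sigma(T^*T)\setminus\{0\}=\sigma(TT^*)\setminus\{0\}$, and the correspondence $x\mapsto Tx$ between nonzero eigenspaces. Your explicit isometry $x\mapsto\lambda^{-1/2}Tx$ with inverse $y\mapsto\lambda^{-1/2}T^*y$ is a slightly cleaner way to see that infinite multiplicity is preserved than the paper's bare injectivity check, but it is the same idea.
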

\begin{proof}
	To prove the result it is enough to show that $\sigma_{ess}(T^*T)\setminus\{0\}\subseteq\sigma_{ess}(TT^*)\setminus\{0\}$. Let $\alpha\in\sigma_{ess}(T^*T)\setminus\{0\}$. By Theorem (\ref{thm8}), either $\alpha$ is an eigenvalue of $T^*T$ with infinite multiplicity or $\alpha\in\sigma_c(T^*T)$ or $\alpha$ is a limit point of $\sigma_p(T^*T)$.
	
	Case (1): Let $\alpha$ be an eigenvalue of $T^*T$ with infinite multiplicity. This implies there exist $\{x_{\delta}\in H:\delta\in\Lambda\}$ such that $T^*Tx_{\delta}=\alpha x_{\delta}$, where $\Lambda$ is an indexing set. So we get $TT^*(Tx_{\delta})=\alpha(Tx_{\delta})$. We have $Tx_{\delta}\neq Tx_{\bar{\delta}}$ if $\delta\neq\bar{\delta}$ and $\delta,\bar{\delta}\in\Lambda$. Because if $Tx_{\delta}=Tx_{\bar{\delta}}$ then $T^*Tx_{\delta}=T^*Tx_{\bar{\delta}}$, thus $x_{\delta}=x_{\bar{\delta}}$. Hence $\alpha$ is an eigenvalue of $TT^*$ of infinite multiplicity.
	
	Case (2): Let $\alpha\in\sigma_c(T^*T)$. This implies $T^*T-\alpha I$ is one-one with $\overline{R(T^*T-\alpha I)}=H$ but $T^*T-\alpha I$ does not have a bounded inverse. We know that $\sigma_r(TT^*)$ is empty and $\sigma(TT^*)\setminus\{0\}=\sigma(T^*T)\setminus\{0\}$. Thus we get that either $\alpha\in\sigma_p(TT^*)$ or $\alpha\in\sigma_c(TT^*)$. Now we will show that $\alpha\notin\sigma_p(TT^*)$. Let $(TT^*-\alpha I)x=0$ for some $0\neq x\in H$. Applying on both sides $T^*$, we get $T^*TT^*x=\alpha T^*x$, thus $T^*x\in N(T^*T-\alpha I)$. As $T^*T-\alpha I$ is one-one, so $T^*x=0$ and hence $x=0$. Thus we get $\alpha\in\sigma_{c}(TT^*)$.
	
	Case (3): Let $\alpha$ be a limit point of $\sigma_p(T^*T)$. So there exist a sequence $(\lambda_n)\subseteq\sigma_p(T^*T)\setminus\{0\}$ such that $(\lambda_n)$ converges to $\alpha$. Also $(\lambda_n)\subseteq\sigma_p(TT^*)$, so $\alpha$ is a limit point of $\sigma_p(TT^*)$. Hence $\alpha\in\sigma_{ess}(TT^*)$.
	
	In all the three cases we get $\alpha\in\sigma_{ess}(TT^*)$. Hence $\sigma_{ess}(T^*T)\setminus\{0\}\subseteq\sigma_{ess}(TT^*)\setminus\{0\}.\qedhere$
\end{proof}
\begin{theorem}\label{spectralequalities}
	Suppose $T\in\mathcal{B}(H)$, such that $N(T)=N(T^*)$. Then we have the following:
\begin{enumerate}
\item\label{ptspectrumequality} $\sigma_p(T^*T)=\sigma_p(TT^*)$.
\item \label{contspectrumequality}$\sigma_c(T^*T)=\sigma_c(TT^*)$.
\item\label{spectrumequality} $\sigma(T^*T)=\sigma(TT^*)$.
\item\label{essspectrumequality} $\sigma_{ess}(T^*T)=\sigma_{ess}(TT^*)$.
\item\label{discretespectrumequality} $\sigma_{d}(T^*T)=\sigma_{d}(TT^*)$.
\end{enumerate}
\begin{proof}
First note that as $N(T^*T)=N(T)$ and $N(TT^*)=N(T^*)$, by the assumption it follows that $N(T^*T)=N(TT^*)$.

Proof of (\ref{ptspectrumequality}): Let $\lambda \in \sigma_p(T^*T)$. First, assume that $\lambda=0$. Then ${\{0}\}\neq N(T^*T)=N(TT^*)$, we can conclude that $0\in \sigma_p(TT^*)$. The other implication follows in the similar lines. Next, assume that $\lambda\neq 0$. Let $0\neq x\in H$ be such that $T^*Tx=\lambda x$. Then $(TT^*)Tx=\lambda Tx$. Since $x\in R(T^*T)\subseteq N(T)^{\bot}$, $Tx\neq 0$. This means that $\lambda \in \sigma_{p}(TT^*)$. Similarly, the other way implication can be proved.

Proof of (\ref{contspectrumequality}): Let $\lambda \in \sigma_c(T^*T)$. Then $\lambda \notin \sigma_p(T^*T)$ and $R(T^*T-\lambda I)$ is not closed. By (\ref{ptspectrumequality}), $\lambda \notin \sigma_p(TT^*)$. By \cite[Theorem 4.4]{KUL}, it follows that $\lambda$ is  an accumulation point of $\sigma(T^*T)$. As $\sigma(T^*T)\setminus {\{0}\}=\sigma(TT^*)\setminus {\{0}\}$, $\lambda$ is an accumulation point of $\sigma(TT^*)$. Hence by \cite[Theorem 4.4]{KUL}, $R(TT^*-\lambda I)$ is not closed, concluding $\lambda \in \sigma_c(TT^*)$. The other implication can be proved with similar arguments.

Proof of (\ref{spectrumequality}): Since for a self-adjoint operator the residual spectrum is empty and the spectrum is disjoint union of the point spectrum, continuous spectrum and the residual spectrum, by (\ref{ptspectrumequality}) and (\ref{contspectrumequality}), the conclusion follows.

Proof of (\ref{essspectrumequality}): In view of Lemma (\ref{lemma8}), it is enough to show that $0\in \sigma_{ess}(T^*T)$ if and only if $0\in \sigma_{ess}(TT^*)$. But this follows by the definition of the essential spectrum and (\ref{ptspectrumequality}) and (\ref{contspectrumequality}) proved above.

Proof of (\ref{discretespectrumequality}): For a self-adjoint operator $A\in \mathcal B(H)$, $\sigma(A)=\sigma_{ess}(A)\cup \sigma_{d}(A)$, the conclusion follows by (\ref{spectrumequality}) and (\ref{essspectrumequality}) above.
\end{proof}
\end{theorem}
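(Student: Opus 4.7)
The plan is to exploit the hypothesis $N(T)=N(T^*)$, which via the standard identifications $N(T^*T)=N(T)$ and $N(TT^*)=N(T^*)$ yields $N(T^*T)=N(TT^*)$. This nullspace equality is exactly the extra ingredient needed beyond the classical fact $\sigma(T^*T)\setminus\{0\}=\sigma(TT^*)\setminus\{0\}$ to transfer every spectral part across the critical point $\lambda=0$. Both $T^*T$ and $TT^*$ are positive, hence self-adjoint, so their residual spectra vanish, which will streamline each step.

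For the point spectrum (\ref{ptspectrumequality}), I would split on $\lambda$. For $\lambda=0$, the identity $N(T^*T)=N(TT^*)$ is precisely the statement $0\in\sigma_p(T^*T)\Leftrightarrow 0\in\sigma_p(TT^*)$. For $\lambda\neq 0$, the classical intertwining trick suffices: if $T^*Tx=\lambda x$ with $x\neq 0$, then $TT^*(Tx)=\lambda(Tx)$, and $Tx\neq 0$, because otherwise $x\in N(T)=N(T^*T)$ would force $\lambda x=0$. The reverse direction is symmetric.

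I would next prove the full spectrum equality (\ref{spectrumequality}) before (\ref{contspectrumequality}). Only $\lambda=0$ needs to be checked. Since $T^*T$ is positive, $0\in\sigma(T^*T)$ if and only if either $N(T^*T)\neq\{0\}$ or $R(T^*T)$ is non-closed. The first condition transfers by the nullspace hypothesis. For the second, I would invoke \cite[Theorem 4.4]{KUL}, according to which, for a self-adjoint operator, $R(A)$ is non-closed iff $0$ is an accumulation point of $\sigma(A)$; the already-known equality $\sigma(T^*T)\setminus\{0\}=\sigma(TT^*)\setminus\{0\}$ then lets this accumulation-point property transfer between $T^*T$ and $TT^*$. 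With (\ref{ptspectrumequality}) and (\ref{spectrumequality}) in hand, (\ref{contspectrumequality}) drops out immediately from $\sigma_c=\sigma\setminus(\sigma_p\cup\sigma_r)$ and $\sigma_r=\emptyset$ in the self-adjoint case.

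For (\ref{essspectrumequality}), Lemma (\ref{lemma8}) already covers $\lambda\neq 0$, so I only need $0\in\sigma_{ess}(T^*T)\Leftrightarrow 0\in\sigma_{ess}(TT^*)$. Using the three-way characterization in Theorem (\ref{thm8}), each clause (infinite-dimensional eigenspace at $0$, limit point of $\sigma_p$, or $0\in\sigma_c$) transfers via the nullspace hypothesis together with (\ref{ptspectrumequality}) and (\ref{contspectrumequality}). Finally (\ref{discretespectrumequality}) is immediate from $\sigma=\sigma_d\sqcup\sigma_{ess}$ for self-adjoint operators, combined with (\ref{spectrumequality}) and (\ref{essspectrumequality}). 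The only real obstacle throughout is the consistent handling of $\lambda=0$, which is precisely what the hypothesis $N(T)=N(T^*)$ is tailored to resolve.
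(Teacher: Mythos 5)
Your proposal is correct and follows essentially the same route as the paper: the same case split at $\lambda=0$ using $N(T^*T)=N(TT^*)$, the same intertwining argument for nonzero eigenvalues, the same appeal to \cite[Theorem 4.4]{KUL} and to $\sigma(T^*T)\setminus\{0\}=\sigma(TT^*)\setminus\{0\}$, and the same use of Lemma (\ref{lemma8}) for the essential spectrum. The only (harmless) difference is that you derive (\ref{spectrumequality}) first and obtain (\ref{contspectrumequality}) as a corollary, whereas the paper argues (\ref{contspectrumequality}) directly and then deduces (\ref{spectrumequality}).
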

\begin{corollary}\label{minmodessminmodequality}
Let $T\in \mathcal B(H)$ be such that $N(T)=N(T^*)$. Then
  \begin{enumerate}
\item \label{equalityofminmod}$m(T)=m(T^*)$.
\item \label{equalitypfessminmod}$m_e(T)=m_e(T^*)$.
\end{enumerate}
\begin{proof}
  Since $\sigma(T^*T)=\sigma(TT^*)$ by (\ref{spectrumequality}) of Theorem (\ref{spectralequalities}), by the spectral mapping theorem, $\sigma(|T|)=\sigma(|T^*|)$. Now,
  \begin{align*}
  m(T)&=\inf{\{\lambda: \lambda \in \sigma(|T|)}\}\\
      &=\inf{\{\lambda: \lambda \in \sigma(|T^*|)}\}\\
      &=m(T^*).
  \end{align*}

  Next, by \cite{BOL} and by (\ref{essspectrumequality}) of Theorem (\ref{spectralequalities}), we have
  \begin{align*}
  m_e(T)&=\inf{\{\lambda:\lambda \in \sigma_{ess}(|T|)}\}\\
        &=\inf{\{\lambda:\lambda \in \sigma_{ess}(|T^*|)}\}\\
        &=m_e(T^*).
  \end{align*}
\end{proof}
\end{corollary}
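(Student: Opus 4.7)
The plan is to reduce both equalities to corresponding equalities between the spectra (respectively essential spectra) of $|T|$ and $|T^*|$, and then invoke the relevant parts of Theorem \ref{spectralequalities}. First I would express the minimum modulus in terms of $|T|$: the identity $\|Tx\|^2=\langle T^*Tx,x\rangle$ together with positivity of $T^*T$ yields $m(T)^2=\inf\sigma(T^*T)$, and the spectral mapping theorem applied to $f(\lambda)=\sqrt{\lambda}$ gives $\sigma(|T|)=\{\sqrt{\lambda}:\lambda\in\sigma(T^*T)\}$, so $m(T)=\inf\sigma(|T|)$. An analogous identity holds for $m(T^*)$ in terms of $\sigma(|T^*|)$.

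For part (1), I would then plug in part (\ref{spectrumequality}) of Theorem \ref{spectralequalities}, which under the hypothesis $N(T)=N(T^*)$ asserts $\sigma(T^*T)=\sigma(TT^*)$. Taking the image under $\sqrt{\cdot}$ gives $\sigma(|T|)=\sigma(|T^*|)$, and comparing infima finishes (1). For part (2), the strategy is parallel: I would cite the (by now standard) fact, available for example in \cite{BOL}, that for a bounded positive self-adjoint operator $A$ one has $m_e(A)=\inf\sigma_{ess}(A)$; applied to $|T|$ and $|T^*|$ this identifies $m_e(T)$ with $\inf\sigma_{ess}(|T|)$ and similarly for $T^*$. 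Then part (\ref{essspectrumequality}) of Theorem \ref{spectralequalities} gives $\sigma_{ess}(T^*T)=\sigma_{ess}(TT^*)$, and passing through $\sqrt{\cdot}$ yields $\sigma_{ess}(|T|)=\sigma_{ess}(|T^*|)$.

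The only nontrivial point is justifying the spectral-mapping step for the essential spectrum of a bounded self-adjoint operator under the continuous function $\sqrt{\cdot}$. This is standard through the continuous functional calculus (essentially spectrum commutes with continuous maps for normal operators, via the Weyl characterization of $\sigma_{ess}$), so beyond this citation the argument is a short bookkeeping computation that packages the equalities obtained in Theorem \ref{spectralequalities} into statements about $m(T)$ and $m_e(T)$.
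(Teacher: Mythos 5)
Your proposal is correct and follows essentially the same route as the paper: reduce $m(T)$ and $m_e(T)$ to $\inf\sigma(|T|)$ and $\inf\sigma_{ess}(|T|)$ (the latter via \cite{BOL}), then transport the equalities $\sigma(T^*T)=\sigma(TT^*)$ and $\sigma_{ess}(T^*T)=\sigma_{ess}(TT^*)$ from Theorem (\ref{spectralequalities}) through the square-root spectral mapping. The only difference is that you spell out the justification of $m(T)^2=\inf\sigma(T^*T)$ and of the spectral mapping for the essential spectrum, which the paper leaves implicit.
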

\begin{corollary}
Let $T\in \mathcal B(H)$ be such that $N(T)=N(T^*)$. Then
 $T\in \mathcal{M}(H)$ if and only if $T^*\in \mathcal {M}(H)$.
\end{corollary}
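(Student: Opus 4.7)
The plan is to reduce the statement to the equality of point spectra of $T^*T$ and $TT^*$ already established in Theorem (\ref{spectralequalities}). The pivotal observation is that for any $S\in\mathcal{B}(H)$, $S$ is minimum attaining if and only if $m(S)^2=m(S^*S)$ is an eigenvalue of the positive operator $S^*S$. Indeed, a unit vector $x$ satisfies $\|Sx\|=m(S)$ iff $\langle(S^*S-m(S^*S)I)x,x\rangle=0$, and since $S^*S-m(S^*S)I\geq 0$, this forces $(S^*S-m(S^*S)I)x=0$.

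Granting this characterization, suppose first that $T\in\mathcal{M}(H)$, so there is a unit vector $x$ with $T^*Tx=m(T)^2 x$. If $m(T)=0$, then $x\in N(T^*T)=N(T)=N(T^*)$, so $T^*x=0$; using $m(T^*)=m(T)=0$ from Corollary (\ref{minmodessminmodequality}), we see that $T^*$ attains its minimum at $x$. If instead $m(T)>0$, applying $T$ to both sides of $T^*Tx=m(T)^2 x$ gives $TT^*(Tx)=m(T)^2 Tx$; as in the proof of part (\ref{ptspectrumequality}) of Theorem (\ref{spectralequalities}), $x\in R(T^*T)\subseteq N(T)^{\perp}$ forces $Tx\neq 0$, so $y:=Tx/\|Tx\|$ is a unit vector with $\|T^*y\|^2=\langle TT^*y,y\rangle=m(T)^2=m(T^*)^2$ by Corollary (\ref{minmodessminmodequality}). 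Thus $T^*\in\mathcal{M}(H)$.

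The converse implication is symmetric: since $(T^*)^*=T$ and $N(T^*)=N(T)$, the same argument applied to $T^*$ in place of $T$ yields $T\in\mathcal{M}(H)$ from $T^*\in\mathcal{M}(H)$. I do not anticipate any real obstacle; the spectral bookkeeping that does the heavy lifting (namely $\sigma_p(T^*T)=\sigma_p(TT^*)$ and $m(T)=m(T^*)$) is already established in Theorem (\ref{spectralequalities}) and Corollary (\ref{minmodessminmodequality}), and the only care required is the split $m(T)=0$ vs.\ $m(T)>0$ to make sure the vector $Tx$ used to witness the minimum for $T^*$ is nonzero, which is exactly where the hypothesis $N(T)=N(T^*)$ enters in the degenerate case.
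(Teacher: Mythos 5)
Your proposal is correct and follows essentially the same route as the paper: the paper likewise observes that $T\in\mathcal{M}(H)$ forces $m(T)^2\in\sigma_p(T^*T)$, then invokes $\sigma_p(T^*T)=\sigma_p(TT^*)$ from Theorem (\ref{spectralequalities}) together with $m(T)=m(T^*)$ from Corollary (\ref{minmodessminmodequality}) to conclude $T^*\in\mathcal{M}(H)$. You merely unfold the eigenvector transfer $x\mapsto Tx$ and the $m(T)=0$ degenerate case explicitly, which the paper leaves implicit.
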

\begin{proof}
	First, note that $m(T)=m(T^*)$ by (\ref{equalityofminmod}) of Corollary (\ref{minmodessminmodequality}). It is enough to show that $T\in\mathcal{M}(H)$ implies $T^*\in\mathcal{M}(H)$. Assume that $T\in\mathcal{M}(H)$. Thus $m(T)^2\in \sigma_p(T^*T)$. But by Theorem (\ref{spectralequalities}), $m(T^*)^2\in \sigma_p(TT^*)$, concluding $TT^*\in \mathcal M(H)$ and hence $T^*\in \mathcal M(H)$.
\end{proof}
\section{Normal $\mathcal{AM}$-operators}
In this section we describe a spectral decomposition of normal $\mathcal{AM}$-operators.
\begin{definition}
Let $T\in\mathcal{B}(H)$.Then
\begin{enumerate}
	\item $T$ is \textit{hyponormal} if $TT^*\leq T^*T$. Equivalently, $T$ is hyponormal if $\|T^*x\|\leq\|Tx\|$ for all $x\in H$.
	\item $T$ is \textit{paranormal} if $\|Tx\|^2\leq \|T^2x\|$ for all $x\in S_H$. Equivalently, $T$ is paranormal if $\|Tx\|^2\leq \|T^2x\|\|x\|$ for all $x\in H$.
\end{enumerate}
\end{definition}

 It is easy to see that every hyponormal operator is paranormal \cite{IST1}. More details about hyponormal and paranormal operators can be found in \cite{AND,BER,IST,IST1,HAL}.
 \begin{remark}\label{paraAM}
 	Let $T\in\mathcal{B}(H)$.
 	\begin{enumerate}
 		\item If $T$ is paranormal, then $N(T)=N(T^2)$.
 		\item If $T$ is hyponormal, then $N(T)\subset N(T^*)$. This inclusion is strict, for example, the right shift operator $R$ on $l^2(\mathbb{N})$ is hyponormal such that $N(R)\subsetneq N(R^*)$. Also note that $R^*$ is not paranormal.
 	\end{enumerate}
 \end{remark}
\begin{theorem}\label{lem5}
Let $T\in \mathcal{B}(H)$, with $N(T)=N(T^*)$. Then
\begin{enumerate}
	\item\label{para} $T$ is paranormal if and only if $T^{\dagger}$ is paranormal.
	\item\label{AM} $T\in\mathcal{AM}(H)$ if and only if $R(T)$ is closed and $T^{\dagger}\in\mathcal{AN}(H)$.
\end{enumerate}
 \end{theorem}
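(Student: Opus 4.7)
The common backbone of both parts is the observation that the hypothesis $N(T)=N(T^*)$ forces $N(T)$ to reduce $T$, giving the decomposition $T = 0 \oplus T_0$ on $H = N(T) \oplus N(T)^\perp$, with $T_0 := T\big|_{N(T)^\perp}$ injective and with dense range in $N(T)^\perp$. I plan to exploit this structural fact throughout.

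For (\ref{para}), my first step is to reduce paranormality of $T$ to paranormality of $T_0$. Writing $x = x_0 \oplus x_1$ with $x_0 \in N(T)$ and $x_1 \in N(T)^\perp$ in the inequality $\|Tx\|^2 \leq \|T^2 x\|\,\|x\|$, the left hand side depends only on $x_1$ while the right hand side is minimized at $x_0 = 0$, so paranormality of $T$ on $H$ is equivalent to paranormality of $T_0$ on $N(T)^\perp$. Under the implicit closed-range hypothesis (needed for $T^\dagger$ to be a bounded operator, and hence for ``paranormal'' to apply in its usual sense), $T_0$ is bijective on $N(T)^\perp$ and $T^\dagger = 0 \oplus T_0^{-1}$; the same reduction then shows that paranormality of $T^\dagger$ is equivalent to paranormality of $T_0^{-1}$. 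The remaining step is the algebraic fact that for an invertible $S \in \mathcal{B}(K)$, paranormality of $S$ and of $S^{-1}$ are equivalent, which I will prove by substituting $y = S^2 x$ (so $S^{-1}y = Sx$ and $S^{-2}y = x$) into the paranormality inequality for $S^{-1}$ and letting $y$ range over $K$.

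For (\ref{AM}) I will chain several known equivalences. By \cite[Corollary~4.9]{GAN}, $T \in \mathcal{AM}(H)$ iff $T^*T \in \mathcal{AM}(H)$, and by Theorem \ref{thm2} applied to the positive operator $T^*T$, this is equivalent to $R(T^*T)$ being closed together with $(T^*T)^\dagger \in \mathcal{AN}(H)$. Using the standard fact that $R(T^*T)$ is closed iff $R(T)$ is closed and the identity $(T^*T)^\dagger = T^\dagger(T^\dagger)^*$ from Theorem \ref{thmben}, the condition becomes: $R(T)$ is closed and $T^\dagger(T^\dagger)^* \in \mathcal{AN}(H)$. I will then verify, using Theorem \ref{thmben}, that $N(T^\dagger) = N(T^*)$ and $N((T^\dagger)^*) = N(T)$, so the hypothesis $N(T) = N(T^*)$ transfers to $N(T^\dagger) = N((T^\dagger)^*)$. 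This allows Theorem \ref{spectralequalities} applied to $T^\dagger$ to give $\sigma_{ess}((T^\dagger)^*T^\dagger) = \sigma_{ess}(T^\dagger(T^\dagger)^*)$, after which the $\mathcal{AN}$-analogue of Theorem \ref{thm4}, namely \cite[Theorem~2.7]{RAM1}, yields $T^\dagger(T^\dagger)^* \in \mathcal{AN}(H)$ iff $(T^\dagger)^*T^\dagger \in \mathcal{AN}(H)$. A final appeal to the known equivalence $A \in \mathcal{AN}(H)$ iff $A^*A \in \mathcal{AN}(H)$, with $A = T^\dagger$, delivers $T^\dagger \in \mathcal{AN}(H)$ and closes the chain.

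The principal obstacle is the delicate bookkeeping in (\ref{AM}): two ``$X$ is $\mathcal{AM}$/$\mathcal{AN}$ iff $X^*X$ is'' results and one ``$A^*A$ versus $AA^*$'' swap must be composed correctly, and one must ensure that the symmetric null-space hypothesis $N(T)=N(T^*)$ propagates to the analogous hypothesis for $T^\dagger$ so that Theorem \ref{spectralequalities} is applicable. It is precisely this propagation that makes the hypothesis both natural and essential in the statement.
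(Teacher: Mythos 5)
Your proposal is correct and follows essentially the same route as the paper: the $N(T)\oplus N(T)^{\perp}$ decomposition reducing (\ref{para}) to the injective part $T_0$ and its inverse, and for (\ref{AM}) the chain through $T^*T$, $TT^*$, Theorem \ref{thm2} and the Moore--Penrose identities of Theorem \ref{thmben}, with the $A^*A$ versus $AA^*$ swap merely performed at the $\mathcal{AN}$ level (via Theorem \ref{spectralequalities} applied to $T^{\dagger}$ and \cite[Theorem 2.7]{RAM1}) instead of at the $\mathcal{AM}$ level (via Theorem \ref{thm4}) as the paper does. The only substantive difference is that your substitution $y=S^2x$ gives a short self-contained proof that paranormality of an invertible $S$ and of $S^{-1}$ are equivalent, where the paper cites \cite[Theorem 1]{IST} and \cite[Lemma 3.8]{RAM1}; your explicit remark that $R(T)$ must be closed for part (\ref{para}) to make sense is a point the paper leaves implicit.
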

\begin{proof}
	Let $T_0=T|_{N(T)^{\perp}}:N(T)^{\perp}\to N(T)^{\perp}$.
	
	Proof of (\ref{para}): Let $T$ be paranormal. By \cite[Lemma 3.8]{RAM1}, $T_0$ is paranormal. Since $N(T)=N(T^*)$, we have $T_0^{-1}$ is paranormal, by \cite[Theorem 1]{IST}. As
		\begin{align*}
		T^{\dagger}x=
		\begin{cases}
		 0&\text{ if }x\in N(T),\\
		 T_0^{-1}x &\text{ if }x\in N(T)^{\perp}.
		\end{cases}
		\end{align*}
		Hence $T^{\dagger}$ is paranormal.
		Reverse implication is clear, as $(T^{\dagger})^\dagger=T$.
		
	Proof of (\ref{AM}): By Theorem (\ref{spectralequalities}) $\sigma_{ess}(TT^*)=\sigma_{ess}(T^*T)$. Thus using Theorem (\ref{thm4}) $T^*T\in\mathcal{AM}(H)$ if and only if $TT^*\in\mathcal{AM}(H)$. Now using all these arguments and Theorem (\ref{thm2}) we conclude the following.
	\begin{align*}
	T\in\mathcal{AM}(H)&\iff T^*T\in\mathcal{AM}(H)\\
	&\iff TT^*\in\mathcal{AM}(H)\\
	&\iff R(TT^*) \text{ is closed and }(TT^*)^{\dagger}\in\mathcal{AN}(H)\\
	&\iff R(T) \text{ is closed and }(T^{\dagger})^*T^{\dagger}\in\mathcal{AN}(H)\\
	&\iff R(T) \text{ is closed and }T^{\dagger}\in\mathcal{AN}(H).
	\end{align*}
		
		% Let $T\in\mathcal{AM}(H)$ and paranormal.
		
		%Case $(1)$: Suppose $N(T)$ is infinite dimensional. Then $N(T^*T)$ is also infinite dimensional. Note that $T^*T\in \mathcal{AM}(H)$. By Theorem (\ref{thm6}), we have $T^*T$ is a finite-rank operator and $T$ is a finite rank operator. Using \cite[Theorem 3.2]{KAR}, $T^{\dagger}$ is finite-rank operator and hence $T^{\dagger}\in\mathcal{AN}(H)$.
		
		%Case $(2)$: Suppose $N(T)$ is finite dimensional. Then
	%{\[T=
	%\begin{bmatrix}
	%0 & 0\\
	%0 & T_0
	%\end{bmatrix}
	%\text{ and }
	%T^{\dagger}=
	%\begin{bmatrix}
	%0 & 0\\
	%0 & T_0^{-1}
	%\end{bmatrix}.
	%$\]}
	%As $T\in\mathcal{AM}(H)$, we have $T_0\in\mathcal{AM}(N(T)^{\perp})$. Using \cite[Theorem 5.1]{KUL1}, we get $T_0^{-1}\in\mathcal{AN}(H)$. Hence $T^{\dagger}\in\mathcal{AN}(H)$, by Proposition \ref{REM2}.
	
	%Conversely, let $T^{\dagger}\in\mathcal{AN}(H)$. This implies $T_0^{-1}\in\mathcal{AN}(N(T)^{\perp})$. By \cite[Theorem 5.1]{KUL1}, $T_0\in\mathcal{AM}(N(T)^{\perp})$. If $N(T)$ is finite dimensional then $T\in\mathcal{AM}(H)$, by Proposition (\ref{REM2}). If $N(T)$ is infinite dimensional, then $T^{\dagger}$ is compact. We know that a compact paranormal operator is finite-rank operator, thus $T^{\dagger}$ is finite-rank. Using \cite[Theorem 3.2]{KAR}, $T$ is also finite-rank and hence $T\in\mathcal{AM}(H).\qedhere$

\end{proof}

The following result is not used in the article, but it is of independent interest.
\begin{theorem}
	Let $T\in \mathcal{B}(H)$ be such that $R(T)=R(T^2)$. Also assume that $R(T)$ is closed. Then $T$ is paranormal implies $T^{\dagger}$ is paranormal.
\end{theorem}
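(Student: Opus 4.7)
My plan is to reduce the theorem to Theorem~\ref{lem5}\ref{para} by showing that under the given hypotheses $T$ automatically satisfies $N(T)=N(T^{*})$.

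First I would exploit paranormality: Remark~\ref{paraAM}(1) gives $N(T)=N(T^{2})$. Together with $R(T)=R(T^{2})$ and the closedness of $R(T)$, this yields the algebraic direct-sum decomposition $H=N(T)\dotplus R(T)$. Indeed, if $Tz\in N(T)$ then $z\in N(T^{2})=N(T)$, so $Tz=0$, giving $N(T)\cap R(T)=\{0\}$; and for any $x\in H$, $Tx\in R(T)=R(T^{2})$ produces $u\in H$ with $Tx=T^{2}u$, hence $x-Tu\in N(T)$. The closed subspace $R(T)$ is $T$-invariant, and the restriction $S:=T|_{R(T)}\colon R(T)\to R(T)$ is paranormal (restrictions of paranormal operators to invariant subspaces are paranormal, since $\|Sx\|=\|Tx\|$ and $\|S^{2}x\|=\|T^{2}x\|$ for $x\in R(T)$) and bijective (injective by the intersection condition, surjective because $R(S)=R(T^{2})=R(T)$). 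By \cite[Theorem 1]{IST}, $S^{-1}\colon R(T)\to R(T)$ is then paranormal.

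The crux is to establish $N(T)\perp R(T)$, equivalently $N(T)\subseteq N(T^{*})$. Substituting $y=x+w$ with $x\in N(T)$ into the paranormal inequality $\|Ty\|^{2}\leq\|T^{2}y\|\|y\|$ (noting $Ty=Tw$, $T^{2}y=T^{2}w$) and minimizing $\|x+w\|$ over $x\in N(T)$---the minimum, attained at $x=-P_{N(T)}w$, equals $\|P_{N(T)^{\perp}}w\|$---yields the strengthened paranormality
\[
\|Tw\|^{2}\leq \|T^{2}w\|\,\|P_{N(T)^{\perp}}w\|\qquad(w\in H).
\]
Combining this with the descent condition $R(T)=R(T^{2})$, which allows every $Tw$ to be rewritten as $T^{2}u$ and the strengthened estimate to be iterated along $R(T)$, should force the orthogonality $N(T)\perp R(T)$. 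Once $R(T)\subseteq N(T)^{\perp}$ is in hand, the algebraic decomposition $H=N(T)\dotplus R(T)$ together with $N(T)^{\perp}$ being itself a complement of $N(T)$ forces $R(T)=N(T)^{\perp}$: any strict inclusion $R(T)\subsetneq N(T)^{\perp}$ would allow $m\in N(T)^{\perp}\setminus R(T)$, but writing $m=n+r$ with $n\in N(T)$, $r\in R(T)\subseteq N(T)^{\perp}$ gives $n=m-r\in N(T)\cap N(T)^{\perp}=\{0\}$, so $m=r\in R(T)$, a contradiction. Hence $N(T)=N(T^{*})$, and Theorem~\ref{lem5}\ref{para} yields that $T^{\dagger}$ is paranormal.

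The main obstacle is the orthogonality step. The strengthened paranormal inequality replaces $\|w\|$ by the smaller quantity $\|P_{N(T)^{\perp}}w\|$, and the descent condition lets one iterate along $R(T)$; combining the two to rule out non-orthogonal algebraic decompositions $H=N(T)\dotplus R(T)$ is the technically delicate part, since paranormality alone generically allows such non-orthogonal complements---it is only the additional descent condition $R(T)=R(T^{2})$ that closes the gap.
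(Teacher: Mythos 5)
Your reduction has a genuine gap at exactly the point you flag as ``technically delicate'': the orthogonality $N(T)\perp R(T)$ is never actually proved, and your entire strategy collapses without it, since Theorem~\ref{lem5}~(\ref{para}) cannot be invoked until $N(T)=N(T^*)$ is in hand. The preparatory steps are all correct --- $N(T)=N(T^2)$ from Remark~\ref{paraAM}, the algebraic decomposition $H=N(T)\dotplus R(T)$, the invertibility and paranormality of $S=T|_{R(T)}$, and the strengthened inequality $\|Tw\|^2\leq\|T^2w\|\,\|P_{N(T)^{\perp}}w\|$ obtained by minimizing over $N(T)$ --- but the proposed ``iteration along $R(T)$'' does not close the argument. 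Iterating the strengthened inequality along an orbit gives $\|S^{n}r\|^{2}\leq\|S^{n+1}r\|\,\|P_{N(T)^{\perp}}S^{n-1}r\|$, and combined with the log-convexity of $n\mapsto\|S^{n}r\|$ this only yields $\|P_{N(T)}S^{n}r\|/\|S^{n}r\|\to 0$ as $n\to\infty$, i.e.\ asymptotic orthogonality of the orbit to $N(T)$; it does not return any information about $r$ itself. A proof that merely asserts the decisive step ``should'' follow is not a proof, so as it stands the argument is incomplete.

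For comparison, the paper does not route through Theorem~\ref{lem5} at all. It verifies the paranormal inequality for $T^{\dagger}$ directly: since $T^{\dagger}$ vanishes on $R(T)^{\perp}$, it suffices to test $y\in R(T)=R(T^2)$; writing $y=T^{2}v$ with $v\in N(T^{2})^{\perp}=N(T)^{\perp}$ one computes $T^{\dagger}y=Tv$ and $(T^{\dagger})^{2}y=v$, whence $\|T^{\dagger}y\|^{2}=\|Tv\|^{2}\leq\|T^{2}v\|\,\|v\|=\|y\|\,\|(T^{\dagger})^{2}y\|$. This is far shorter and needs only the paranormality of $T$ at the single vector $v$. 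Be aware, though, that the identity $T^{\dagger}y=Tv$ uses $P_{N(T)^{\perp}}Tv=Tv$, i.e.\ $R(T)\subseteq N(T)^{\perp}$, which the paper asserts without argument --- so you have correctly located the genuine pressure point of the theorem. But identifying it is not the same as resolving it: either supply a proof that the hypotheses force $N(T)\subseteq N(T^{*})$ (note that for general paranormal operators this inclusion is not automatic --- Remark~\ref{paraAM} already records that only hyponormality gives $N(T)\subseteq N(T^{*})$, and the adjoint-shift example there shows the subtlety of kernel conditions in this class), or restructure the argument so that it is not needed.
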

\begin{proof}
	Assume that $T$ is paranormal. Then
	\begin{equation*}
	\|Tx\|^2\leq\|T^2x\|\|x\|,\, \forall\,x\in H.
	\end{equation*}
	 Since $T^{\dagger}u=0$ for all $u\in R(T)^{\perp}$. It suffices to show that
	 \begin{equation*}
	 \|T^{\dagger}y\|^2\leq\|{T^{\dagger}}^2y\|\|y\|,\,\forall\, y\in R(T).
	 \end{equation*}
	  Let $y\in R(T)=R(T^2)$. Then $y=T^2v$ for some $v\in N(T^2)^{\perp}=N(T)^{\perp}$ (see (\ref{para}) of Remark (\ref{paraAM})). Now $T^{\dagger}y=T^{\dagger}T^2v=P_{N(T)^{\perp}}Tv=Tv$, as $R(T)\subseteq N(T)^{\perp}$. Also $(T^{\dagger})^2y=T^{\dagger}Tv=P_{R(T^{\dagger})}v=v$. Thus we have
	\begin{equation*}
	\|T^{\dagger}y\|^2=\|Tv\|^2\leq \|v\|\|y\|=\|{T^{\dagger}}^2y\|\|y\|.
	\end{equation*}
	Thus $T^{\dagger}$ is paranormal.$\qedhere$
\end{proof}

 Now we will show that the classes of paranormal $\mathcal{AM}$-operators and hyponormal $\mathcal{AM}$-operators are the same, under some assumption.
\begin{theorem}\label{hyponormalorpara}
	Let $T\in\mathcal{B}(H)$ with $N(T)=N(T^*)$. If $T\in\mathcal{AM}(H)$ is paranormal then  $T$ is hyponormal.
\end{theorem}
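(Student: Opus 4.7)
The plan is to route the problem through the Moore--Penrose inverse $T^{\dagger}$ so that the analogous theorem for $\mathcal{AN}$-operators in \cite{RAM1} (``a paranormal $\mathcal{AN}$-operator is hyponormal'') can be invoked, and then to pull hyponormality back to $T$. First I would apply Theorem \ref{lem5}: part (\ref{para}), together with $T$ paranormal and $N(T)=N(T^*)$, gives that $T^{\dagger}$ is paranormal, while part (\ref{AM}), together with $T\in\mathcal{AM}(H)$, gives that $R(T)$ is closed and $T^{\dagger}\in\mathcal{AN}(H)$. Before appealing to the $\mathcal{AN}$-analog I would also check that the hypothesis $N(T)=N(T^*)$ transfers to $T^{\dagger}$: using parts (2), (5), (6) of Theorem \ref{thmben}, one gets $N(T^{\dagger})=N(T^*)$ and $N((T^{\dagger})^*)=N(T)$, so $N(T)=N(T^*)$ forces $N(T^{\dagger})=N((T^{\dagger})^*)$. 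The cited $\mathcal{AN}$-analog then yields that $T^{\dagger}$ is hyponormal.

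Next I would transfer hyponormality from $T^{\dagger}$ back to $T$. The hypothesis $N(T)=N(T^*)$ makes $N(T)$ a reducing subspace for $T$, and since $R(T)$ is closed with $R(T)\subseteq N(T^*)^{\perp}=N(T)^{\perp}$, in fact $R(T)=N(T)^{\perp}$. Writing $T_0:=T|_{N(T)^{\perp}}$, the operator $T_0$ is a bijective bounded operator on $N(T)^{\perp}$ and
\[ T=0\oplus T_0,\qquad T^{\dagger}=0\oplus T_0^{-1} \]
on $H=N(T)\oplus N(T)^{\perp}$. Hyponormality is preserved by orthogonal direct sums of zero operators with hyponormals, so $T^{\dagger}$ hyponormal is equivalent to $T_0^{-1}$ hyponormal. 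For any invertible operator $S$, the identities $(S^*S)^{-1}=S^{-1}(S^{-1})^*$ and $(SS^*)^{-1}=(S^{-1})^*S^{-1}$, together with the order-reversing behavior of inversion on positive invertible operators, give the standard equivalence that $S$ is hyponormal iff $S^{-1}$ is. Applied to $S=T_0^{-1}$, this gives $T_0$ hyponormal, and hence $T=0\oplus T_0$ is hyponormal on all of $H$.

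The substantive input is the $\mathcal{AN}$-analog invoked at the end of the first step; everything else is direct-sum bookkeeping together with the standard inversion identity for hyponormality. I expect the main obstacle to be confirming that the cited $\mathcal{AN}$-result in \cite{RAM1} applies cleanly to $T^{\dagger}$ without additional hypotheses, but since we have verified that $T^{\dagger}$ is a paranormal $\mathcal{AN}$-operator satisfying $N(T^{\dagger})=N((T^{\dagger})^*)$, the reduction should be entirely routine; the present theorem is then essentially the $\mathcal{AM}$-shadow of its $\mathcal{AN}$ counterpart, transmitted via $T\mapsto T^{\dagger}$.
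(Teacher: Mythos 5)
Your argument is mechanically sound and takes a genuinely different route from the paper. The paper proves the result directly: it writes $T^*T=\beta I-K+F$, disposes of the case $\beta=0$ by noting that $T$ is then a finite-rank (hence compact) paranormal operator, which is normal by Qiu's theorem, and for $\beta>0$ it shows $T^*\in\mathcal{AM}(H)$, diagonalizes $TT^*$ in an orthonormal eigenbasis $\{y_\alpha\}$, and extracts $\|T^*y_{\alpha}\|\leq\|Ty_{\alpha}\|$ for each $\alpha$ from the paranormal inequality applied to the vector $T^*y_{\alpha}$. Your reduction through $T^{\dagger}$ avoids all of this: the transfer of the kernel condition via parts (2), (5), (6) of Theorem (\ref{thmben}) is correct; Theorem (\ref{lem5}) does give that $T^{\dagger}$ is a paranormal $\mathcal{AN}$-operator with $R(T)$ closed; and the pull-back of hyponormality is right, since $T=0\oplus T_0$ with $T_0$ invertible (because $R(T)=N(T^*)^{\perp}=N(T)^{\perp}$), and order-reversal of inversion on positive invertible operators gives that $T_0^{-1}$ is hyponormal if and only if $T_0$ is. What your approach buys is a short derivation of the $\mathcal{AM}$ statement from its $\mathcal{AN}$ twin; what the paper's approach buys is a self-contained computation whose argument then also yields that twin.

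The one point you must nail down is the external input. Everything rests on the claim that a paranormal $\mathcal{AN}$-operator $S$ with $N(S)=N(S^*)$ is hyponormal. In this paper that statement appears only \emph{after} Theorem (\ref{hyponormalorpara}), as a theorem the authors say follows ``by a similar argument'' --- that is, they obtain the $\mathcal{AN}$ twin from the very proof you are replacing, and they do not cite it from \cite{RAM1}. If \cite{RAM1} genuinely proves the $\mathcal{AN}$ statement independently, in a form applicable to $T^{\dagger}$, your proof is complete as written. If \cite{RAM1} only contains the characterization and decomposition results that this paper actually quotes from it, then your argument is circular relative to the present paper, and you would still owe a direct proof of hyponormality for paranormal $\mathcal{AN}$-operators --- which is essentially the eigenvector computation the paper carries out. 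Verify the precise statement in \cite{RAM1} before relying on it.
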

\begin{proof}
	Since $T\in\mathcal{AM}(H)$, $T^*T\in \mathcal{AM}(H)$. Hence by Theorem (\ref{thm6}), $T^*T=\beta I-K+F$, where $K\in\mathcal{K}(H)_+$ with $\|K\|\leq \beta$ and $F\in\mathcal{F}(H)_+$ satisfies $KF=0=FK$.
	
	Case (1): Let $\beta=0$. As a consequence of Remark (\ref{REM1}), we conclude that $T^*T$ is a finite-rank operator and so is $T$. Also we know that a paranormal compact operator is normal \cite{QIU}, hence $T$ is normal.

	Case (2): Let $\beta> 0$. By Theorem (\ref{spectralequalities}), we have $\sigma_{ess}(T^*T)=\{\beta\}=\sigma_{ess}(TT^*)$. Now using Theorem (\ref{thm4}), we get $T^*\in\mathcal{AM}(H)$. So $H$ has a basis, say $\{y_{\alpha}:\alpha\in\Lambda\}$,  consisting of eigenvectors of $TT^*$. Let $TT^*y_{\alpha}=\lambda_{\alpha}y_{\alpha}$ where $\lambda_{\alpha}\in [0,\infty)$.

If $\lambda_{\alpha_0}\neq 0$ for some $\alpha_0$, then
\begin{align*}
\|T^*y_{\alpha_0}\|^4&=|\langle T^*y_{\alpha_0},T^*y_{\alpha_0}\rangle|^2\\
&=|\langle TT^*y_{\alpha_0},y_{\alpha_0}\rangle|^2\\
&\leq\|TT^*y_{\alpha_0}\|^2\|y_{\alpha_0}\|^2.
\end{align*}
As $T$ is paranormal, by the above inequality, we have
\begin{align*}
\|T^*y_{\alpha_0}\|^4\leq \|TT^*y_{\alpha_0}\|^2\|y_{\alpha_0}\|^2&\leq \|T^2T^*y_{\alpha_0}\|\|T^*y_{\alpha_0}\|\|y_{\alpha_0}\|^2\\
&=\lambda_{\alpha_0}\|Ty_{\alpha_0}\|\|T^*y_{\alpha_0}\|\|y_{\alpha_0}\|^2\\
&=\|Ty_{\alpha_0}\|\|T^*y_{\alpha_0}\|\langle TT^*y_{\alpha_0},y_{\alpha_0}\rangle\\
&=\|Ty_{\alpha_0}\|\|T^*y_{\alpha_0}\|\|T^*y_{\alpha_0}\|^2.
\end{align*}
This implies $\|T^*y_{\alpha_0}\|\leq\|Ty_{\alpha_0}\|$.

If for some $\alpha,\,\lambda_{\alpha}=0$, then $\|T^*y_{\alpha}\|^2=0=\|Ty_{\alpha}\|^2$, since $N(T)=N(T^*)$. This implies for each $y\in H$, $\|T^*y\|^2\leq \|Ty\|^2$. Hence $T$ is hyponormal.

 %Let $0\in\sigma_{ess}(TT^*)$. As $T\in\mathcal{AM}(H)$, $R(T)$ is closed, hence $0\notin\sigma_c(TT^*)$. Also $0$ cannot be a limit of $\sigma_{p}(TT^*)$, otherwise $0$ is also a limit point of $\sigma_{p} (T^*T)$ and as a consequence $0\in\sigma_{ess}(T^*T)$, which is a contradiction to the assumption that $\beta>0$. So by Theorem (\ref{thm8}), $0$ is an eigenvalue of $TT^*$ of infinite multiplicity.

%Let $T_0=T\big|_{N(T)^{\perp}}$. As $N(T)^{\bot}$ reduces $T$, it follows that $T_0^*=T^*|_{N(T)^{\bot}}$. Thus $T_0^*T_0=(T^*T)|_{N(T)^{\bot}}$ and $T_0T_0^*=TT^*|_{N(T)^{\bot}}$. Note that $\sigma_{ess}(T_0^*T_0)=\{\beta\}=\sigma_{ess}(T_0T_0^*)$. {\color{red}{As $N(T)=N(T^*)$}}, we get that $T_0$ is paranormal $\mathcal{AM}$-operator and hence by Case (1), $T_0$ is hyponormal. Now we will show that $T$ is also hyponormal. So if $x\in N(T)$ then $Tx=0=T^*x$, hence $\|T^*x\|^2=0=\|Tx\|^2$. Now let $x\in N(T)^{\perp}$ {\color{red}{then}} $x\in N(T^*)^{\perp}$ {\color{red}{and}} $\langle TT^*x,x\rangle=\langle T_0T_0^*x,x\rangle\leq\langle T_0^*T_0x,x\rangle=\langle T^*Tx,x\rangle$, Thus we conclude that $T$ is hyponormal. $\qedhere$
\end{proof}

By a similar argument as in Theorem (\ref{hyponormalorpara}), we can prove the following;

\begin{theorem}
Let $T\in \mathcal{AN}(H)$ be paranormal operator with $N(T)=N(T^*)$. Then $T$ is hyponormal.
\end{theorem}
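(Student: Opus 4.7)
The plan is to mirror the proof of Theorem \ref{hyponormalorpara} nearly verbatim, with $\mathcal{AM}$ replaced by $\mathcal{AN}$ throughout. First I would reduce to the positive operator $T^*T$: since $T\in\mathcal{AN}(H)$, we have $T^*T\in\mathcal{AN}(H)$ by the standard product/square characterization of $\mathcal{AN}$-operators. Then I invoke the structural theorem for positive $\mathcal{AN}$-operators from \cite[Theorem 5.1]{PAL} to write $T^*T=\alpha I+K+F$ with $\alpha\geq 0$, $K\in\mathcal{K}(H)_+$, and $F\in\mathcal{F}(H)$ self-adjoint (with the appropriate orthogonality $KF=FK=0$ in the refined form from \cite{PAL, RAM}).

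Next I would split into two cases on $\alpha$. If $\alpha=0$, then $T^*T$ is compact, so $T$ is compact; by \cite{QIU} a paranormal compact operator is normal, in particular hyponormal, and we are done. If $\alpha>0$, I use the hypothesis $N(T)=N(T^*)$ together with Theorem \ref{spectralequalities} to obtain $\sigma_{ess}(T^*T)=\sigma_{ess}(TT^*)$, and then invoke the $\mathcal{AN}$ analogue of Theorem \ref{thm4} (namely \cite[Theorem 2.7]{RAM1}) to conclude $TT^*\in\mathcal{AN}(H)$, and hence $T^*\in\mathcal{AN}(H)$. Since $TT^*$ is a positive $\mathcal{AN}$-operator, its structural decomposition yields an orthonormal basis $\{y_\alpha\}_{\alpha\in\Lambda}$ of $H$ consisting of eigenvectors of $TT^*$, say $TT^*y_\alpha=\lambda_\alpha y_\alpha$ with $\lambda_\alpha\geq 0$.

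Finally, I would repeat the eigenvector calculation from Theorem \ref{hyponormalorpara} verbatim: for each $\alpha$ with $\lambda_\alpha>0$, start from
\[
\|T^*y_\alpha\|^4=|\langle TT^*y_\alpha,y_\alpha\rangle|^2\leq\|TT^*y_\alpha\|^2\|y_\alpha\|^2,
\]
apply paranormality to bound $\|TT^*y_\alpha\|^2\leq\|T^2T^*y_\alpha\|\|T^*y_\alpha\|$, substitute $T^2T^*y_\alpha=\lambda_\alpha Ty_\alpha$ together with $\lambda_\alpha=\|T^*y_\alpha\|^2$, and telescope down to $\|T^*y_\alpha\|\leq\|Ty_\alpha\|$. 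For $\lambda_\alpha=0$ both norms vanish by $N(T)=N(T^*)$, and spanning over the basis then gives $\|T^*y\|\leq\|Ty\|$ on all of $H$, establishing hyponormality. The only real obstacle I anticipate is confirming that the $\mathcal{AN}$-analogue of Theorem \ref{thm4} is available in the precise form needed and that the positive $\mathcal{AN}$-decomposition delivers an orthonormal eigenbasis; once those are in hand, the paranormality computation is identical to the $\mathcal{AM}$ case.
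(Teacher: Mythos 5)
Your proposal is correct and follows exactly the route the paper intends: the paper gives no separate proof for this theorem, stating only that it follows "by a similar argument as in Theorem \ref{hyponormalorpara}", and your adaptation (using \cite[Theorem 5.1]{PAL} for the structure of the positive $\mathcal{AN}$-operator $T^*T$, Qiu's result in the compact case, Theorem \ref{spectralequalities} plus \cite[Theorem 2.7]{RAM1} to get $T^*\in\mathcal{AN}(H)$ and an eigenbasis of $TT^*$, and then the identical paranormality computation) is precisely that argument.
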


 Now we will give a characterization of normal $\mathcal{AM}$-operators, which is similar to the result for paranormal $\mathcal{AN}$-operators, given in \cite{RAM1}.
\begin{theorem}\label{spectraldecomposition}
	Let $T\in\mathcal{B}(H)$ be normal. Suppose $T\in\mathcal{AM}(H)$ with $\Lambda=\sigma(|T|)$, where $|T|=\sqrt{T^*T}$. Then there exist $(H_{\beta}, U_{\beta})_{\beta\in\Lambda}$, where $H_\beta$ is a reducing subspace for $T$, $U_\beta\in \mathcal{B}(H_\beta)$ is a unitary such that,
	\begin{enumerate}
		\item $H=\underset{\beta\in\Lambda}{\oplus}H_{\beta}$,
		\item $T=\underset{\beta\in\Lambda}{\oplus}\beta U_{\beta}$.
	\end{enumerate}
\end{theorem}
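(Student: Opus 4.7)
The plan is to combine the polar decomposition of the normal operator $T$ with the structure theorem (Theorem \ref{thm6}) for positive $\mathcal{AM}$-operators applied to $|T|$. First, since $T$ is normal and $T\in\mathcal{AM}(H)$, we have $T^*T\in\mathcal{AM}(H)_+$ (as used in the proof of Theorem \ref{thm4}). Applying the spectral mapping theorem to the square root function, $\sigma_{ess}(|T|)$ is a singleton $\{\beta_0\}$ and $(\beta_0,\|T\|]$ contains only finitely many eigenvalues of $|T|$, so by Theorem \ref{thm3}, $|T|\in\mathcal{AM}(H)_+$. Theorem \ref{thm6} then yields $|T|=\beta_0 I-K+F$ with $K\in\mathcal{K}(H)_+$, $F\in\mathcal{F}(H)_+$, $\|K\|\leq\beta_0$, and $KF=FK=0$.

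Next, I would construct the orthogonal eigenspace decomposition $H=\bigoplus_{\beta\in\sigma(|T|)}H_\beta$, where $H_\beta:=\ker(|T|-\beta I)$. The starting point is to split $H=H_K\oplus H_F\oplus (H_K\oplus H_F)^\perp$ with $H_K=\overline{R(K)}$ and $H_F=R(F)$; the relation $KF=FK=0$ gives $R(F)\subseteq N(K)=H_K^\perp$ (by self-adjointness of $K$), hence $H_F\perp H_K$, and each piece reduces $|T|$. On $H_K$, $|T|$ acts as $\beta_0 I-K$, and since $K|_{H_K}$ is compact self-adjoint with trivial kernel, the spectral theorem yields a complete orthonormal basis of $H_K$ of eigenvectors of $|T|$ with eigenvalues $\beta_0-\mu$ for $\mu\in\sigma_p(K)\setminus\{0\}$. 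On $H_F$, finite-rank $F|_{H_F}$ is diagonalizable, giving eigenvalues $\beta_0+\nu$. On $(H_K\oplus H_F)^\perp$, $|T|=\beta_0 I$. Assembling these, every point of $\sigma_p(|T|)$ contributes a nonzero eigenspace; if $\beta_0\in\sigma_{ess}(|T|)\setminus\sigma_p(|T|)$ then $H_{\beta_0}=\{0\}$, but indexing over the full spectrum $\sigma(|T|)$ still recovers $H$ as the direct sum.

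Finally, I would produce the unitaries via the polar decomposition $T=U|T|$. Because $R(T)$ is closed ($T\in\mathcal{AM}$) and $N(T)=N(T^*)$ (normality), $U$ restricts to a unitary on $N(T)^\perp$. Normality of $T$ also yields $U|T|=|T|U$, and taking adjoints $U^*|T|=|T|U^*$, so both $U$ and $U^*$ preserve every eigenspace $H_\beta$; thus $T$ commutes with the orthogonal projection onto $H_\beta$, making $H_\beta$ a reducing subspace for $T$. For $\beta>0$, $H_\beta\subseteq N(|T|)^\perp=N(T)^\perp$, so $U_\beta:=U|_{H_\beta}$ is unitary on $H_\beta$, and $T|_{H_\beta}=U|T||_{H_\beta}=\beta U_\beta$. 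For $\beta=0$ (when $\ker T\neq\{0\}$) we have $T|_{H_0}=0$, so we may take $U_0=I_{H_0}$; for any $\beta$ with $H_\beta=\{0\}$ the unitary is trivial.

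The step I expect to require the most care is the eigenspace decomposition of $|T|$: verifying that the three pieces $H_K$, $H_F$, and $(H_K\oplus H_F)^\perp$ genuinely exhaust $H$ and form an orthogonal reducing decomposition for $|T|$, and handling cleanly the possibility that $\beta_0$ sits in $\sigma_{ess}(|T|)$ without being an eigenvalue, so that the direct sum over $\sigma(|T|)$ still equals $H$.
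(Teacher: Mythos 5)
Your argument is correct, but it follows a genuinely different route from the paper. The paper passes to the Moore--Penrose inverse: since $T$ is a normal $\mathcal{AM}$-operator, $T^{\dagger}$ is a normal $\mathcal{AN}$-operator, so the known decomposition theorem for normal $\mathcal{AN}$-operators \cite[Theorem 3.9]{RAM1} gives reducing subspaces $G_{\alpha}$ and unitaries $V_{\alpha}$ with $T^{\dagger}=\oplus_{\alpha}\,\alpha V_{\alpha}$; the bulk of the paper's work then goes into verifying that each $G_{\alpha}$ also reduces $T$ and into the relabelling $\beta=\alpha^{\dagger}$, $U_{\beta}=V_{\beta}^{*}$. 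You instead work directly with $|T|$: the structure theorem (Theorem~\ref{thm6}) applied to $|T|=\beta_0 I-K+F$ yields an orthonormal basis of eigenvectors of $|T|$ via the three mutually orthogonal reducing pieces $\overline{R(K)}$, $R(F)$ and $N(K)\cap N(F)$, and the commuting polar decomposition $T=U|T|=|T|U$ of a normal operator then shows each eigenspace $H_{\beta}=N(|T|-\beta I)$ reduces $T$ with $T|_{H_\beta}=\beta\, U|_{H_\beta}$ unitary (for $\beta>0$, because $H_\beta\subseteq N(T)^{\perp}=\overline{R(T)}$, where $U$ and $U^{*}$ are both isometric and leave $H_\beta$ invariant). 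Your route is more self-contained and elementary --- it avoids the generalized inverse entirely and does not need the external $\mathcal{AN}$ decomposition theorem, at the cost of redoing the diagonalization that that theorem encapsulates; the paper's route is shorter given the cited result but requires the nontrivial transfer of reducing subspaces from $T^{\dagger}$ back to $T$. The only points to make explicit in a final write-up are the standard facts you invoke in passing: $\sigma_{ess}(\sqrt{A})=\{\sqrt{\lambda}:\lambda\in\sigma_{ess}(A)\}$ for $A\geq 0$ (so that Theorem~\ref{thm3} applies to $|T|$; alternatively note directly that $\|Tx\|=\||T|x\|$ gives $|T|\in\mathcal{AM}(H)$), and the commutation $U|T|=|T|U$ for the polar decomposition of a normal operator.
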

\begin{proof}
	As $T\in\mathcal{AM}(H)$ is normal, $T^{\dagger}$ is normal $\mathcal{AN}$-operator, by Theorem (\ref{thmben}). Using \cite[Theorem 3.9]{RAM1}, there exist $(G_{\alpha}, V_{\alpha})_{\alpha\in\Gamma}$, where $G_{\alpha}$ is a reducing subspace for $T^{\dagger}$, $V_{\alpha}\in \mathcal{B}(G_{\alpha})$ is an unitary such that,
	\begin{enumerate}[(i)]
		\item\label{H} $H=\underset{\alpha\in\Gamma}{\oplus}G_{\alpha}$,
		\item\label{T} $T^{\dagger}=\underset{\alpha\in\Gamma}{\oplus}\alpha V_{\alpha}$,
	\end{enumerate}
	where $\Gamma=\sigma(|T^{\dagger}|)$. Now we claim that $G_{\alpha}$ is a reducing subspace for $T$ as well. If $\alpha=0$ then $G_{\alpha}=N(T^*)=N(T)$. Clearly $G_{\alpha}$ is a reducing subspace for $T$. On the other hand if $\alpha\neq 0$, then $G_{\alpha}\subseteq N(T)^{\perp}$.
	
	Let $x\in G_{\alpha}$. Assume that $Tx=a+b$, where $a\in G_{\alpha}\cap N(T)^{\perp}$ and $b\in G_{\alpha}^{\perp}\cap N(T)^{\perp}$. Now
	\begin{align*}
	T^{\dagger}b=T^{\dagger}Tx-Ta &=P_{R(T^{\dagger})}x-Ta\\
	&=P_{N(T)^{\perp}}x-Ta\\
	&=x-Ta\in G_{\alpha}^{\perp}.
	\end{align*}
	Since $G_{\alpha}$ is a reducing subspace for $T^{\dagger}$, we get $T^{\dagger}b\in G_{\alpha}^{\perp}$, thus $T^{\dagger}b\in G_{\alpha}\cap G_{\alpha}^{\perp}$. So $T^{\dagger}b=0$, but $b\in N(T)^{\perp}$, so $b=0$ and hence $Tx=a\in G_{\alpha}$. We get $G_{\alpha}$ is invariant under $T$.
	
	To show that $G_{\alpha}^{\perp}$ is invariant under $T$, let $y\in G_{\alpha}^{\perp}$. If $Ty=u+v$, where $u\in G_{\alpha}$ and $v\in G_{\alpha}^{\perp}$, then $T^{\dagger}Ty=P_{N(T)^{\perp}}y\in N(T)^{\perp}\cap G_{\alpha}^{\perp}$. Now by similar argument as above, we get $T^{\dagger}u\in G_{\alpha}\cap G_{\alpha}^{\perp}$, so $T^{\dagger}u=0$. This implies $u\in N(T)$, but $u\in G_{\alpha}\subseteq N(T)^{\perp}$, so $u=0$ and hence $Ty=v\in G_{\alpha}^{\perp}$. By (\ref{H}) and (\ref{T}), we get
	\begin{enumerate}
		\item $T=\underset{\alpha\in\Gamma}{\oplus}\alpha^{\dagger} V_{\alpha}^*$.
	\end{enumerate}
	where $\alpha^{\dagger}$ is defined as in Remark (\ref{REM3}). By using \cite[Proposition 3.15]{KUL1}, we get $\alpha^{\dagger}\in\sigma(|T^{\dagger}|^{\dagger})=\sigma(|T^*|)$. But 
	by Theorem (\ref{spectralequalities}), we have that $\sigma(|T|)=\sigma(|T^*|)$, as $N(T)=N(T^*)$.
	Thus we get the result, by taking $\beta=\alpha^{\dagger}$, $H_{\beta}=G_{\alpha}$ and $U_{\beta}=V_{\beta}^*$. As $V_{\beta}$ is an unitary, it is clear that $U_{\beta}$ is a unitary.
\end{proof}

Below we illustrate Theorem (\ref{spectraldecomposition}) with an example.
\begin{example}\label{multplication_AMoperator}
		Let $(X,\Sigma,\mu)$ be a $\sigma$-finite measure space. For $f\in L_{\infty}(X)$ define the multiplication operator $M_f:L_2(X)\to L_2(X) $ by
	\begin{equation*}
	M_f(g)=fg,\text{ for all }g\in L_2(X).
	\end{equation*}
	Then
	\begin{enumerate}
		\item $m(M_f)=\text{ess inf}(f)$, where $\text{ess inf}(f)=\sup\{\alpha\in\mathbb{R}:\mu(\{x\in X:|f(x)|<\alpha\})=0\}$.
		\item $M_f$ is minimum attaining if and only if there exist $A\in \Sigma$  with $\mu(A)>0$ such that $|f(t)|=\text{ess inf} (f)$ for all $t\in A$.
		\item $M_f\in\mathcal{AM}(L_2(X))$ if and only if there exist a sequence $(A_i)\subseteq \Sigma$ with $\mu(A_i)>0$ such that  $|f(t)|=\text{ess inf}(f_i)$ for all $t\in A_i$ and $X=X_0\cup\left(\underset{i=0}{\overset{\infty}{\cup}}A_i\right)$, where $X_0\in\Sigma$ with $\mu(X_0)=0$ and $f_i=f$ with domain $X\setminus\left(\cup_{j=1}^{i-1}A_j\right)$.
	\end{enumerate}
\end{example}
\begin{proof}
	\begin{enumerate}
		\item  Let $g\in L_2(X)$. Then
		\begin{align*}
		\|M_f(g)\|_2^2=&\int_{X}|f(t)|^2|g(t)|^2d\mu(t)\\
		\geq&(\text{ess inf}(f))^2\|g\|^2_2.
		\end{align*}
		Thus we get $m(M_f)\geq \text{ess inf}(f)$. Now for every $n\in\mathbb{N}$, define $$E_n=\{t\in X:|f(t)|\leq \text{ess inf}(f)+1/n\}.$$ By the definition of $\text{ess inf}(f)$, it is clear that $\mu(E_n)>0$. As $(X,\Sigma,\mu)$ is a $\sigma$-finite measure space, choose a measurable set $F_n\subseteq E_n$ such that $0<\mu(F_n)<\infty$, for every $n\in\mathbb{N}$. Let $g_n=\frac{\chi_{F_n}}{\sqrt{\mu(F_n)}}$, then
		\begin{align*}
		\|M_f(g_n)\|_2^2=&\int_{X}|f(t)|^2\frac{\chi_{F_n}}{\mu(F_n)}d\mu\\
		=&\frac{1}{\mu(F_n)}\int_{F_n}|f(t)|^2d\mu(t)\\
		\leq&(\text{ess inf}(f)+1/n)^2,\,\forall\,n\in\mathbb{N.}
		\end{align*}
		Thus we get $m(M_f)\leq \text{ess inf}(f)$. Hence $\|M_f\|=\text{ess inf}(f)$.
		\item  To prove this, we use a similar technique that is used in \cite[Lemma 2.6]{acostaetal}. First we assume that $M_f$ is a minimum attaining operator. Then there exists a $ g_0\in S_{L_2(X)}$, such that $\|M_f(g_0)\|_2=m(M_f)=\text{ess inf}(f)$. Now we have
		\begin{align*}
		\int_{X}(\text{ess inf}(f))^2|g_0(t)|^2d\mu(t)=&(\text{ess inf}(f))^2\\
		=&\|M_f(g_0)\|_2^2\\
		=&\int_{X}|f(t)|^2|g_0(t)|^2d\mu(t).
		\end{align*}
		This implies $\text{ess inf}(f)|g_0(t)|=|f(t)||g_0(t)|$ a.e. Let $C=\{t\in X:g_0(t)\neq 0\}$. Then $C$ is a measurable set with $\mu(C)>0$. There exist a subset $Z$ of $C$ such that $\mu(Z)=0$. Thus $|f(t)|=\text{ess inf}(f)$ for all $t\in A:=C\setminus Z$.
		
		Conversely, let $A\in \Sigma$ satisfying the given property. Choose a measurable subset $B$ of $A$ such that $0<\mu(B)<\infty$. Define $g_0=\frac{\chi_B}{\sqrt{\mu(B)}}$. It is easy to see that $g_0\in S_{L_2(X)}$. Now
		\begin{align*}
		\|M_f(g_0)\|_2^2=&\int_{X}|fg_0|^2d\mu\\
		=&\int_{B}|fg_0|^2d\mu\\
		=&\frac{(\text{ess inf} (f))^2}{\mu(B)}\int_{B}d\mu\\
		=&(\text{ess inf}(f))^2.
		\end{align*}
		Hence $M_f$ is a minimum attaining operator.
		\item First assume that $M_f\in\mathcal{AM}(L_2(X))$. As $M_f$ is minimum attaining, we get $ A_0\in\Sigma$ such that $\mu(A_0)>0$ and $|f(t)|=\text{ess inf}(f),$ for all $ t\in A_0$.
		
		Let $X_1=X\setminus A_0$. If $\mu(X_1)=0$ then $M_{|f|}=M_{\text{ess inf}(f)}$. Otherwise, it is easy to see that $L_2(X_1)=\{\widehat{g}|_{X_1}: \widehat{g}\in L_2(X),\,\widehat{g}=0\text{ a.e. on }\,A_0 \}$. Define $\Sigma_1=\{X_1\cap A:A\in\Sigma\}$ and $\mu_1=\mu|_{\Sigma_1}$. Let $f_1=f|_{X_1}\in L_{\infty}(X_1)$ and define $M_{f_1}:L_2(X_1)\to L_2(X_1)$ as $M_{f_1}g=f_1g$, for $g\in L_2(X_1)$.
		
		Let $G=\{g\in L_2(X):g=0\text{ a.e. on }\,A_0\}$. We claim that $G$ is a closed subspace of $L_2(X)$. To prove our claim, let $(\widehat{g}_n)\subseteq G$ and $\widehat{g}_n$ converges to some $h\in L_2(X)$. As $(\widehat{g}_n)$ is a Cauchy sequence in $G$,  $(g_n)$ is a Cauchy sequence in $L_2(X_1)$, where $g_n=\hat{g_n}|_{X_1}$ for every $n\in\mathbb{N}$. Using the completeness of $L_2(X_1)$, we can conclude that $(g_n)$ converges to some $g\in L_2(X_1)$. Hence $h=\widehat{g}\in G$, where $\hat{g}(t)=g(t)$ for $t\in X_1$ and $\hat{g}(t)=0$ otherwise.
		
Now
		\begin{align*}
		m(M_f|_{G})=&\inf\{\||M_f|_{G}(\widehat{g})\|:\,\widehat{g}\in S_{G}\}\\
		=&\inf\{\|M_{f_1}g\|:\,g\in L_2(X_1)\}\\
		=&m(M_{f_1}).
		\end{align*}
		As $M_f\in\mathcal{AM}(L_2(X))$, there exist $\hat{g_0}\in S_{G}$ such that $\|M_f|_{G}\hat{g_0}\|=m(M_f|_{G})$. Thus we get $\|M_{f_1}\hat{g_0}|_{X_1}\|=m(M_{f_1})$. Hence $M_{f_1}$ is minimum attaining. Again by (2), we get $ A_1\in\Sigma$ with $\mu(A_1)>0$ and $|f(t)|=\text{ess inf}(f_1)$ for all $t\in A_1$, also $A_0\cap A_1=\emptyset$.
		
		Continuing this way, we get a sequence of sets $(A_i)\subseteq\Sigma$ with $m(A_i)>0$, $A_i\cap A_j=\emptyset$ if $i\neq j$ and $|f(t)|=\text{ess inf}(f_i)$ for all $t\in A_i$, where $f_i=f$ with domain $X\setminus\left({\cup}_{j=0}^{i-1}A_j\right)$.
		
		Case(1): There exist $n_0\in\mathbb{N}$ such that $\mu\left(X\setminus\left({\cup}_{j=0}^{n_0}A_j\right)\right)=0$ then $X_0=X\setminus\left({\cup}_{j=0}^{n_0}A_j\right)$ and $M_{|f|}=\underset{i=0}{\overset{n_0}{\oplus}} M_{\text{ess inf}(f_i)}$.
		
		Case(2): We have the infinite sequence $(A_i)\subseteq\Sigma$. Since $(\text{ess inf}(f_i))$ is a monotonically increasing sequence and bounded above by $\|f\|_{\infty}$, so it is convergent. Let $(\text{ess inf}(f_n))$ converges to $\alpha$. Now set $X_0=X\setminus\left(\cup_{i=0}^{\infty} A_i\right).$ If $\mu(X_0)=0$ then we are done. Otherwise we repeat the above steps for $X_0$ and again get a sequence of sets $(B_j)$ with $\mu(B_j)>0$ and $|f(t)|=\text{ess inf}(g_j)$ for all $t\in B_j$, where $g_j=f$ with domain ${X_0\setminus\left(\underset{k=0}{\overset{j-1}{\cup }B_k}\right)}$. As in case (1), there exists a stage $m_0$ such that $\mu(X_0\setminus(\underset{k=0}{\overset{m_0}{\cup}}B_k))=0.$ Otherwise $\sigma_{ess}(M_{|f|})$ will have more than one element, which is a contradiction to the hypothesis that $M_f\in\mathcal{AM}(L_2(X))$. By taking $$X_0=X\setminus\left\{\left(\underset{i=0}{\overset{\infty}{\cup}}A_i\right)\cup\left(\underset{j=0}{\overset{m_0}{\cup}}B_j\right)\right\},$$ we get the result.

		Conversely, let $(A_i)\subseteq\Sigma$ be a sequence satisfying the given condition. First we will show that $M_f$ is minimum attaining. We have $\mu(A_0)>0$ and $|f(t)|=\text{ess inf}(f)$ for all $t\in A_0$. Choose a subset $B_0$ of $A_0$ such that $0<\mu(B_0)<\infty$. Consider $g=\frac{\chi_{B_0}}{\sqrt{\mu(B_0)}}$, we get $\|M_fg\|_2=\text{ess inf}(f)$.
		
		Now to show that $M_f\in\mathcal{AM}(L_2(X))$, consider a non-trivial closed subspace $E\subseteq L_2(X)$. Let $\mathcal{F}=\{B\in\Sigma:g=0\,a.e.\text{ on }B, \forall\,g\in E\}$ and define a relation $\mathtt{\sim}$ on $\mathcal{F}$ as,  $A\mathtt{\sim} B$ if $A\subseteq B$, for $A,B\in \mathcal{F}$. Now $(\mathcal{F},\mathtt{\sim})$ is a partially ordered set. Using Zorn's Lemma $\mathcal{F}$ has a maximal element, say $B_0$. Then $\mu(X\setminus B_0)>0$, otherwise $E$ will be the trivial space. Let $i_0$ be the smallest natural number for which $\mu((X\setminus B_0)\cap A_{i_0})>0$. So there exist $g_{i_0}\in E$ such that $g_{i_0}\neq 0$ a.e. on $(X\setminus B_0)\cap A_{i_0}$. Existence of such a $g_{i_0}$ is guaranteed, as if for every $g\in E,\,g=0$ a.e. on $C_0:=(X\setminus B_0)\cap A_{i_0}$, then $B_0\cup C_0\in \mathcal{F}$ which contradict the maximality of $B_0$ in $\mathcal{F}$. Now choose a measurable subset $D_0$ of $C_0$ such that $0<\mu(D_0)<\infty$ and consider
		\begin{align*}
		h(t)=&\frac{{\chi}_{D_0}(t)}{\sqrt{\mu(D_0)}}.\frac{g_{i_0}(t)}{|g_{i_0}(t)|}.
		\end{align*}
		Then
		\begin{align*}
		\|M_fh\|_2^2=\int_{D_0}|f(t)|^2d\mu=\text{ess inf}(f_{i_0})=m(M_f|_E)
		\end{align*}
		Thus $M_f|_E$ is minimum attaining. Hence $M_f\in\mathcal{AM}(L_2(X)).\qedhere$
	\end{enumerate}
\end{proof}

Using similar arguments as above, we can prove the following;

\begin{theorem}
Let $M_f$ be defined as in Example \ref{multplication_AMoperator}. Then we have the following;
\begin{enumerate}
\item $M_f\in \mathcal N(L^2(X))$ if and only if there exists a set $A\in \Sigma$ with $\mu(A)>0$ such that $|f(t)|=ess\,\sup(f)$, for all $t\in A$, where $ess\, \sup(f)=\inf\{\alpha\in\mathbb{R}:\mu(\{x\in X:|f(x)|>\alpha\})=0\}$.
\item $M_f\in \mathcal {AN}(L^2(X))$ if and only if there exist a sequence $(A_i)\subseteq \Sigma$ with $\mu(A_i)>0$ such that  $|f(t)|=ess\, \sup(f_i)$ for all $t\in A_i$ and $X=X_0\cup\left(\underset{i}{\cup}A_i\right)$, where $X_0\in\Sigma$ with $\mu(X_0)=0$ and $f_i=f$ with domain $X\setminus\left(\displaystyle \cup_{j=1}^{i-1}A_j\right)$.
\end{enumerate}
\end{theorem}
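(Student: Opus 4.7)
The plan is to mirror the proof of Example (\ref{multplication_AMoperator}) throughout, replacing essential infimum with essential supremum, $m(\cdot)$ with $\|\cdot\|$, and minimum attaining with norm attaining. The preparatory calculation $\|M_f\| = \text{ess sup}(|f|)$ proceeds exactly as the calculation of $m(M_f)$: the bound $\|M_f g\|_2^2 \leq (\text{ess sup}(|f|))^2 \|g\|_2^2$ gives one direction, and testing $M_f$ against normalized $g_n = \chi_{F_n}/\sqrt{\mu(F_n)}$ for finite-measure subsets $F_n \subseteq E_n = \{t : |f(t)| \geq \text{ess sup}(|f|) - 1/n\}$, whose existence is guaranteed by $\sigma$-finiteness, gives the other.

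For (1), if $M_f$ attains its norm at $g_0 \in S_{L^2(X)}$, then expanding
\begin{equation*}
(\text{ess sup}(|f|))^2 = \|M_f g_0\|_2^2 = \int_X |f(t)|^2 |g_0(t)|^2 \, d\mu(t) \leq (\text{ess sup}(|f|))^2 \int_X |g_0|^2 \, d\mu
\end{equation*}
forces equality and hence $|f(t)| = \text{ess sup}(|f|)$ a.e.\ on the support of $g_0$, which (after deleting a null set) provides the desired $A \in \Sigma$ with $\mu(A) > 0$. Conversely, given such an $A$, choose a measurable subset $B \subseteq A$ with $0 < \mu(B) < \infty$ using $\sigma$-finiteness and verify directly that $g_0 = \chi_B/\sqrt{\mu(B)}$ realizes $\|M_f g_0\|_2 = \text{ess sup}(|f|)$.

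For (2), the forward direction iterates: apply (1) to obtain $A_0$; then the closed subspace $G = \{g \in L^2(X) : g = 0 \text{ a.e.\ on } A_0\}$ identifies isometrically with $L^2(X \setminus A_0)$ and carries $M_f|_G$ to $M_{f_1}$, which is again norm attaining since $M_f \in \mathcal{AN}(L^2(X))$, yielding $A_1$; continuing produces a sequence $(A_i)$ with $(\text{ess sup}(f_i))$ monotonically non-increasing. Either the procedure terminates at some stage $n_0$ with $\mu\bigl(X \setminus \bigcup_{j=0}^{n_0} A_j\bigr) = 0$, or else the essential spectrum of the positive $\mathcal{AN}$-operator $M_{|f|}$ (a singleton by the $\mathcal{AN}$-analogue of Theorem (\ref{thm3}), namely \cite[Theorem 2.4]{RAM1}) forces $\mu(X_0) = 0$ for $X_0 = X \setminus \bigcup_i A_i$. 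For the converse, fix a closed subspace $E \subseteq L^2(X)$, apply Zorn's Lemma to $\mathcal{F} = \{B \in \Sigma : g = 0 \text{ a.e.\ on } B \text{ for all } g \in E\}$ to extract a maximal element $B_0$, let $i_0$ be the smallest index with $\mu((X \setminus B_0) \cap A_{i_0}) > 0$, and (by maximality of $B_0$) pick $g_{i_0} \in E$ non-vanishing a.e.\ on $C_0 = (X \setminus B_0) \cap A_{i_0}$. For a finite-measure subset $D_0 \subseteq C_0$, the unit vector
\begin{equation*}
h(t) = \frac{\chi_{D_0}(t)}{\sqrt{\mu(D_0)}} \cdot \frac{g_{i_0}(t)}{|g_{i_0}(t)|}
\end{equation*}
lies in $S_E$ and satisfies $\|M_f h\|_2 = \text{ess sup}(f_{i_0}) = \|M_f|_E\|$, the last equality using the minimality of $i_0$. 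The main obstacle is the termination analysis in the forward direction of (2): one must control where the decreasing sequence $(\text{ess sup}(f_i))$ can stabilize, ensuring via the $\mathcal{AN}$-analogue of Theorem (\ref{thm3}) that it cannot bottom out strictly above the essential spectrum of $M_{|f|}$ on a set of positive measure, which would otherwise contaminate $\sigma_{\text{ess}}(M_{|f|})$ with an extra value.
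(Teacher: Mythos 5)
Your proposal is correct and is essentially the paper's own argument: the paper proves this theorem only by the remark ``Using similar arguments as above,'' i.e.\ by transcribing the proof of Example (\ref{multplication_AMoperator}) with ess\,inf replaced by ess\,sup, $m(\cdot)$ by $\|\cdot\|$, and the $\mathcal{AM}$-characterization replaced by the $\mathcal{AN}$-characterization of \cite[Theorem 2.4]{RAM1}, which is precisely what you do. Your fleshed-out version, including the termination argument via the singleton essential spectrum of $M_{|f|}$ and the Zorn's-lemma construction in the converse of (2), matches the intended proof step for step.
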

We close this section with the following question.
\begin{question}
	When a paranormal $\mathcal{AM}$-operator is normal?
\end{question}
\addcontentsline{toc}{chapter}{References}

\end{document}